\documentclass[journal,twoside,web]{ieeecolor}
\usepackage{generic}
\usepackage{cite}
\usepackage{amsmath,amssymb,amsfonts}
\usepackage{algorithmic}
\usepackage{graphicx}
\usepackage{algorithm,algorithmic}
\usepackage{hyperref}
\usepackage{textcomp}
\usepackage{dsfont}
\usepackage{enumerate}
\usepackage{subfig}

\newtheorem{theorem}{Theorem}

\newtheorem{corollary}{Corollary}
\newtheorem{definition}{Definition}
\newtheorem{assumption}{Assumption}
\newtheorem{remark}{Remark}

\def\BibTeX{{\rm B\kern-.05em{\sc i\kern-.025em b}\kern-.08em
    T\kern-.1667em\lower.7ex\hbox{E}\kern-.125emX}}
\begin{document}
\title{Sinkhorn Distributionally Robust State Estimation via System Level Synthesis}

\author{Yulin Feng, Xianyu Li, Steven X. Ding, Hao Ye, and Chao Shang, \IEEEmembership{Member, IEEE}
\thanks{This work was supported by the National Natural Science Foundation of China (Grant No. 62373211), and the Open Research Project of the State Key Laboratory of Industrial Control Technology, China (Grant No. ICT2025B10) \textit{(Corresponding author: Chao Shang)}.}
\thanks{Yulin Feng, Xianyu Li, Hao Ye and Chao Shang are with the Department of Automation, Beijing National Research Center for Information Science and Technology, Tsinghua University, Beijing 100084, China (e-mail: fyl23@mails.tsinghua.edu.cn;  li-xy22@mails.tsinghua.edu.cn; haoye@tsinghua.edu.cn; c-shang@tsinghua.edu.cn;).}
\thanks{Steven X. Ding is with the Institute for Automatic Control and Complex Systems, University of Duisburg-Essen, 47057 Duisburg, Germany (e-mail: steven.ding@uni-due.de).}}

\maketitle

\begin{abstract}
In state estimation tasks, the usual assumption of exactly known disturbance distribution is often unrealistic and renders the estimator fragile in practice. The recently emerging Wasserstein distributionally robust state estimation (DRSE) design can partially mitigate this fragility; however, its worst-case distribution is provably discrete, which deviates from the inherent continuity of real-world distributions and results in over-pessimism. In this work, we develop a new Sinkhorn DRSE design within system level synthesis scheme with the aim of shaping the closed-loop errors under the unknown continuous disturbance distribution. For uncertainty description, we adopt the Sinkhorn ambiguity set that includes an entropic regularizer to penalize non-smooth and discrete distributions within a Wasserstein ball. We present the first result of finite-sample probabilistic guarantee of the Sinkhorn ambiguity set. Then we analyze the limiting properties of our Sinkhorn DRSE design, thereby highlighting its close connection with the generic $\mathcal{H}_2$ design and Wasserstein DRSE. To tackle the min-max optimization problem, we reformulate it as a finite-dimensional convex program through duality theory. By identifying a compact subset of the feasible set guaranteed to enclose the global optimum, we develop a tailored Frank-Wolfe solution algorithm and formally establish its convergence rate. The advantage of Sinkhorn DRSE over existing design schemes is verified through numerical case studies.

\end{abstract}

\begin{IEEEkeywords}
State estimation, linear systems, system level synthesis, distributionally robust optimization, Frank-Wolfe algorithm
\end{IEEEkeywords}

\section{Introduction}
State estimation is to infer the internal states of dynamic systems from noisy measurements, which has played a key role in predictive control \cite{hakobyan2024wasserstein}, fault diagnosis \cite{ding2014data}, and signal processing \cite{kim2005qrd}. The well-known Kalman filter minimizes the mean squared error (MSE) via a recursive solution in closed form suitable for online implementation \cite{kalman1960new, chen2011kalman,anderson2005optimal}. It has been proven to provide the optimal unbiased estimate under the assumption of exactly known system dynamics and Gaussian noise. Over the past few decades, this kind of least square $\mathcal{H}_2$ design has witnessed significant developments \cite{barbosa2005robust,duan2006robust,talebi2019distributed}. However, uncertainty arising from unmodeled dynamics and non-Gaussian disturbance can often significantly degrade the performance of $\mathcal{H}_2$ filters. To tackle the challenge of designing robust estimators under unknown probability distributions and deterministic disturbances, the distribution-free $\mathcal{H}_{\infty}$ filters formulate a min-max problem under the assumption of bounded uncertainty \cite{poor1981minimax, zames2003feedback}. However, the $\mathcal{H}_{\infty}$ approach may result in over-conservative estimation due to the lack of statistical information. Unlike the aforementioned single-step filters, a distribution-free alternative is to design a observer, which can asymptotically stabilize the error dynamics and offers distinct advantages when dealing with non-Gaussian disturbances. Recent advances in system level synthesis (SLS) provide a rigorous pathway for observer and control design by shaping the closed-loop response under unknown disturbance \cite{anderson2019system,brouillon2023minimal,brouillon2025distributionally}. By formulating the design as a convex optimization problem, the SLS framework effectively handles complicated disturbance while accommodating hard constraints.

To address the aforementioned issues, the distributionally robust optimization (DRO) framework offers a new paradigm, which bridges the gap between $\mathcal{H}_2$ and $\mathcal{H}_{\infty}$ approaches, and thus strikes a balance between robustness and performance \cite{feng2026distributionally,shang2022generalized}. Within this scheme, the unknown disturbance distribution is captured by constructing a so-called ambiguity set, composed of some candidate distributions sharing similar statistical properties. This new uncertainty characterization has gained attraction in state estimation, where the worst-case estimation error is minimized to hedge against distributional ambiguity. The earliest literature can be traced back to \cite{levy2012robust}, where a Kalman filter considering distribution mismatch via $\tau$-divergence was proposed. Building upon the Gaussian assumption, a series of studies on distributionally robust state estimation (DRSE) leverage Wasserstein distance \cite{shafieezadeh2018wasserstein}, moment information \cite{wang2021distributionally}, martingale constraint \cite{lotidis2023wasserstein} and Kullback-Leibler (KL) divergence \cite{xu2024globalized} to handle the distributional shift and outliers in the estimator design. However, the assumption of Gaussianity is fragile in practice. To address non-Gaussian scenarios, the Wasserstein DRO provides a powerful tool due to its out-of-sample guarantees, clear interpretability and computational tractability. In \cite{shafieezadeh2019regularization}, a DRSE method was developed using Wasserstein ambiguity set centered at an empirical distribution, and its equivalence with a regularization scheme was established. Focusing on the temporally global estimation error, an infinite-horizon Wasserstein DRSE and its rational $\mathcal{H}_{\infty}$ approximation were proposed in \cite{hajar2025distributionally}. 

Despite the guaranteed robustness of Wasserstein DRSE designs, they suffer from over-conservatism. In \cite{gao2023distributionally}, the worst-case distribution of Wasserstein DRO was proven to be pathologically discrete, which may deviate from the groundtruth. Indeed, the uncertainty in real-world dynamical processes is typically governed by continuous probability distributions \cite{shang2022generalized}. This hints that the Wasserstein ambiguity set contains an excessive number of unrealistic distributions. Recently, the Sinkhorn distance, an entropy regularized variant of Wasserstein distance, has been adopted to account for those smooth and continuous distributions, which helps to reduce the conservatism of Wasserstein DRO \cite{feng2018model,wang2025sinkhorn}, and has found applications in a variety of decision-making tasks \cite{cescon2025data,ou2026sinkhorn}.

In this paper, a new Sinkhorn DRSE approach is proposed. The main contributions are listed as follows:
\begin{itemize}
    \item We formalize the DRSE problem within the SLS framework using the Sinkhorn ambiguity set, which introduces an entropic regularizer to reduce the conservatism of Wasserstein DRSE. For the Sinkhorn ambiguity set, we establish a formal finite-sample probabilistic guarantee that is new in literature. 
     \item By analyzing the limiting properties under varying hyperparameters, we show that Sinkhorn DRSE can be interpreted as an interpolation between Wasserstein DRSE and $\mathcal{H}_2$ design, allowing the worst-case distribution to better resemble a continuous distribution. 
     \item We reformulate the min-max design problem as a finite-dimensional convex program, for which a tailored first-order Frank-Wolfe algorithm is developed. Crucially, for the unbounded feasible region, we identify a compact subset thereof that is guaranteed to include the global optimal solution such that the Frank-Wolfe algorithm becomes applicable. Notably, the convergence rate of the first-order solution algorithm is formally established. 
     \item The performance of the proposed Sinkhorn DRSE and the effectiveness of the Frank-Wolfe algorithm are illustrated through numerical experiments. The results demonstrate the superiority of our method, particularly in small-sample regimes. 
\end{itemize}

The rest of this article unfolds as follows.
The preliminaries of the SLS framework and the standard state estimation design problem formulation are revisited in Section \ref{preli}. Section \ref{mainr} presents the formal formulation of the Sinkhorn DRSE design problem, its reformulation and tailored solution algorithm. The case study results are given in Section \ref{case}. Section \ref{concl} concludes this article. 

\textit{Notations:} We use $\mathbb{S}^n$ ($\mathbb{S}^n_+$) to denote the space of (positive semi-definite) symmetric matrices in $\mathbb{R}^{n \times n}$, and $\mathbb{N}_{i:j}$ to denote the consecutive integers $\{i,\cdots,j\}$. ${0}_{ n}\in\mathbb{R}^{n\times m}$, $I_n\in\mathbb{R}^{n\times n}$ and ${1}_n \in \mathbb{R}^n$ denote a zero matrix, an identity matrix and an all-ones vector, whose subscript may be dropped when no confusion arises. For a random variable $\xi$ supported on a measurable set $\Xi$, the sets of measures and probability distributions on $\Xi$ are denoted by $\mathcal{M}(\Xi)$ and $\mathcal{P}(\Xi)$. Given a probability distribution $\mathbb{P}$ and two measures $\alpha,\beta$, we write $\mathbb{P}\ll \alpha$ to imply the absolute continuity of $\mathbb{P}$ with respect to $\alpha$, and $\alpha \odot\beta$ as the product measure of $\alpha$ and $\beta$. $\mathbb{E}_{\xi\sim\mathbb{P}}[f(\xi)]$ denote the expectation of the function $f(\xi)$ with respect to the distribution $\mathbb{P}$. $ \delta_{\hat \xi}$ denotes the Dirac distribution concentrated at ${\hat \xi}$, and $\mathcal{N}(\mu,\Sigma)$ denotes a Gaussian distribution with mean $\mu$ and covariance $\Sigma$. Given a set $\mathcal{S}$ of multivariate $(a,b)$, the projection of $\mathcal{S}$ onto the domain of $a$ is denoted by ${\rm proj}_{a}(\mathcal{S})=\{ a~|~ \exists b, ~(a,b)\in\mathcal{S} \}$. For a vector $x$, $x^\top$ and $\lVert x\rVert=\sqrt{x^\top x}$ denote its transpose and Euclidean norm. For a matrix $P$, ${\rm vec}\{P\}$ denotes the vectorization of $P$, ${\rm Tr}\{P\}$ denotes its trace, $P^\top$ denotes its transpose, $P^\dagger$ denotes its Moore-Penrose inverse,  $\bar \sigma(P)$ denotes its maximum eigenvalue, $\lVert P\rVert_{F} $ denotes its Frobenius norm and $\lVert P\rVert_{2} $ denotes its spectral norm.  ${\rm diag}\{P_1,\cdots,P_n\}$ denotes the block diagonal matrix with diagonal block matrices $P_1,\cdots,P_n$, $\langle P_1,P_2\rangle={\rm Tr}\{P_1^\top P_2\}$ denotes the Frobenius inner product, and $ \otimes  $ denotes the Kronecker product operator. For a symmetric $P$, $P \succeq(\succ) 0$ indicates the positive (semi-)definiteness of $P$. 

\section{Preliminaries} \label{preli}
In this section, we briefly review the SLS framework and the standard $\mathcal{H}_2$ state estimation design problem.
We consider the state estimation problem of the following linear stochastic time-varying system:
\begin{equation}\begin{cases} 
{x}(t+1)=A_t{x}(t)+B_t {w}(t)\\
{y}(t)=C_t {x}(t)+D_t{w}(t)&\end{cases} \label{eq_statepace}
\end{equation}
where ${x}(t)\in\mathbb{R}^{n_{x}}$ is the unobserved state, ${y}(t)\in\mathbb{R}^{n_y}$ is the measurable output, and $w\in \mathbb{R}^{n_w}$ is the unknown stochastic disturbance at time index $t$. $A_t \in \mathbb{R}^{n_x \times n_x}$, $B_t \in \mathbb{R}^{n_x \times n_w}$, $C_t \in \mathbb{R}^{n_y \times n_x}$ and $D_t \in \mathbb{R}^{n_y \times n_w}$ are exactly known time-varying state-space matrices. 
\begin{assumption}[\cite{liu2025uniform}]
    The system \eqref{eq_statepace} is uniformly completely observable for any time index $t$. 
\end{assumption}

To estimate the state $\hat{x}(t)$ within the forecasting horizon $t\in[t_0, t_0+T]$ with $T\geq n_x$, the following observer is implemented:
\begin{equation}
    \hat{x}(t+1) =A_t\hat{x}(t) +\sum^{t}_{i=t_0}L_{i| t}\left [ y(i)-C_i  \hat{x}(i) \right ], \label{eq_observer}
\end{equation}
where $L_{i| t}\in \mathbb{R}^{n_x\times n_y}$ represents the observer gain matrices corresponding to the output error $y(i)-C_i  \hat{x}(i)$ at time $i \le t$. This system-level parameterization of linear feedback policies is expressive and includes the well-known Luenberger observer as a special case. While the latter is a recursive estimator with reliance on instantaneous corrections and is thus sensitive to outliers, \eqref{eq_observer} offers enhanced flexibility and robustness when handling unknown complicated disturbances.

\begin{remark} \label{rmk_0}
It is assumed without loss of generality that the disturbance $w(t)$ is zero-mean and there is no extra input. This is because the observer \eqref{eq_observer} can be easily generalized as $   \hat{x}(t+1) =A_t\hat{x}(t)+B_t\mu(t)+B_t'u(t) +\sum^{t}_{i=t_0}L_{i| t}[y(i)-C_i  \hat{x}(i)-D_i\mu(i)]$ to handle the case with non-zero disturbance mean $\mathbb{E}\left[w\left(t\right)\right]=\mu(t) \neq0$ or exactly known inputs $u(t)$. 
\end{remark}

The observer gains must be carefully designed to stabilize the dynamics of the estimation error, defined as $e(t) \triangleq  {x}(t)- \hat{x}(t) $. The error dynamics of the observer \eqref{eq_observer} are derived as follows:
\begin{equation*}
\begin{aligned}
  e(t+1) &={x}(t+1)-\hat{x}(t+1)\\
  &=A_t x(t)+B_t w(t)- A_t\hat x(t)\\
  &~~~~~-\sum^{t}_{i=t_0}L_{i|t}\big[C_ix(i)+D_iw(i) -C_i  \hat{x}(i)\big]\\
  &= A_te(t)+B_t w(t) -\sum^{t}_{i=t_0}L_{i|t}\big[C_ie(i)+D_iw(i) \big].
\end{aligned}
\end{equation*}
Consequently, the dynamics of the stacked error $\bar {e}$ is governed by the following compact matrix equation \cite{anderson2019system,brouillon2023minimal}:
 \begin{equation}
     \bar {e}= Z\bar A\bar {e}+\bar B\xi- L\bar C Z\bar e-L\bar D\xi \label{eq_stack_relation}
 \end{equation}
with the stack vectors defined as
\begin{align*}
&\bar {e} \triangleq \begin{bmatrix}
    e^\top(t_0)&e^\top(t_0+1 )&\cdots&e^\top(t_0+T)
\end{bmatrix}^\top,\\
&\xi\triangleq \begin{bmatrix}
    e^\top(t_0)&w^\top(t_0 )&w^\top(t_0+1 )&\cdots&w^\top(t_0+T-1)
\end{bmatrix}^\top.
\end{align*}
Here, $\bar{e}$ is the trajectory of estimation errors over the forecasting horizon, and $\xi$ captures all system uncertainties including the initial estimation error $e(t_0)$ and disturbances $\left\{w(i)\right\}_{i=t_0}^{t_0+T-1}$. The coefficient matrices in \eqref{eq_stack_relation} are defined as:
\begin{align*}
    &Z = \begin{bmatrix}
        0_{n_x\times n_x}&& &\\
        I_{n_x}&\ddots\\
        & \ddots&\ddots &\\
       && I_{n_x} & 0_{n_x\times n_x}
    \end{bmatrix},\\
    &\bar A= {\rm diag}\left\{ A_{t_0},A_{t_0+1},\cdots,A_{t_0+T-1},0_{n_x \times n_x}\right\},\\
    &\bar B =  {\rm diag}\left\{ I_{n_x},B_{t_0},B_{t_0+1},\cdots,B_{t_0+T-1}\right\},\\
    &\bar C =  {\rm diag}\left\{ 0_{n_y\times n_x},C_{t_0},C_{t_0+1},\cdots,C_{t_0+T-1}\right\},\\
    &\bar D = {\rm diag}\left\{ 0_{n_y\times n_x}, D_{t_0},D_{t_0+1},\cdots,D_{t_0+T-1}\right\},\\
    &{\small
L = \begin{bmatrix}
    0_{n_x\times n_y} & 0 & \cdots & 0 \\ 
    0 & L_{t_0| t_0} & \cdots & 0 \\
    \vdots & \vdots & \ddots & \vdots \\
    0 & L_{t_0| t_0+T-1} & \cdots & L_{t_0+T-1| t_0+T-1}
\end{bmatrix}.
}
\end{align*}
The causality of system dynamics is inherently preserved by the block-downshift operator $Z \in \mathbb{R}^{(T+1)n_x\times(T+1)n_x}$. The well-known SLS parameterization then follows from \eqref{eq_stack_relation} \cite{anderson2019system,brouillon2023minimal}:
\begin{equation}
    \bar e := \bar e(L,\xi)  = \Phi_x \bar B \xi- \Phi_y \bar D \xi, \label{eq_SLS}
\end{equation}
where
\begin{equation}
    \Phi_x = (I-Z \bar A+L\bar CZ )^{-1} \in \mathbb{R}^{(T+1)n_x \times(T+1)n_x } \label{eq_Phi1}
\end{equation}
characterizes the propagation of process disturbances onto the state estimation error, and
\begin{equation}
\Phi_y =  (I-Z \bar A+L\bar CZ )^{-1} L \in \mathbb{R}^{(T+1)n_x \times(T+1)n_y  }\label{eq_Phi2}
\end{equation}
delineates the impact of output measurement noise. Due to the strictly block lower-triangular structure of $Z$, $I-Z \bar A+L\bar CZ $ is invertible, and both $\Phi_x$ and $\Phi_y$ are block lower-triangular:
\begin{align*}
    \Phi_x = \begin{bmatrix}
        \Phi_x^{t_0| t_0}&0 &\cdots&0 \\
        \Phi_x^{t_0| t_0+1}& \Phi_x^{t_0+1| t_0+1}&\ddots&0\\
        \vdots&\vdots&\ddots&\vdots\\
        \Phi_x^{t_0| t_0+T}&\Phi_x^{t_0+1| t_0+T}&\cdots&\Phi_x^{t_0+T| t_0+T}
    \end{bmatrix},\\
        \Phi_y = \begin{bmatrix}
        \Phi_y^{t_0| t_0}&0 &\cdots&0 \\
        \Phi_y^{t_0| t_0+1}& \Phi_y^{t_0+1| t_0+1}&\ddots&0\\
        \vdots&\vdots&\ddots&\vdots\\
        \Phi_y^{t_0| t_0+T}&\Phi_x^{t_0+1| t_0+T}&\cdots&\Phi_y^{t_0+T| t_0+T}
    \end{bmatrix}.
\end{align*}
This structural property enforces causality, ensuring that the estimation error $e(t)$ depends solely on the initial state $x(t_0)$ and past disturbances $w(i)$ for $i\in\mathbb{N}_{t_0:t-1}$. In addition, designing a causal observer gain matrix $L$ is equivalent to the design of block lower-triangular maps $\Phi_x$ and $\Phi_y$, if and only if the following \textit{achievability} condition is satisfied \cite[Theorem 2]{brouillon2023minimal}:
\begin{equation}
    \begin{bmatrix}
        \Phi_x & \Phi_y
    \end{bmatrix}    
    \begin{bmatrix}
      I-Z\bar{A} \\  \bar{C}Z 
    \end{bmatrix} = I.
    \label{eq_sls_constraint}
\end{equation}
Then, the observer gain $L$ can be uniquely determined by:
\begin{equation}
    L = \Phi^{-1}_x \Phi_y.
\end{equation}

To account for unknown stochastic disturbance $w$, we adopt the MSE $\mathbb{E}_{\xi \sim \mathbb{P}^{\star}} \left[ \lVert \bar{e}(L,\xi) \rVert^2 \right]$ for performance evaluation, where $\mathbb{P}^{\star}$ stands for the true distribution of $\xi$. Consequently, the state estimator design problem under the SLS framework is formulated as \cite{brouillon2023minimal}:
\begin{subequations}
\begin{align}
    \min_{\Phi_x, \Phi_y} 
    &~ \mathbb{E}_{\xi \sim \mathbb{P}^{\star}} \left[ \lVert \bar{e} \rVert^2 \right] \label{MHE_MSE} \\
   {\rm  s.t.}~ &~  \bar e = \Phi_x \bar B \xi- \Phi_y \bar D \xi\\
   &~  \begin{bmatrix}
        \Phi_x & \Phi_y
    \end{bmatrix}    \begin{bmatrix}
  I-Z\bar A  \\     \bar CZ 
    \end{bmatrix}=I \label{MHE_structure} \\
   &~ \Phi_x \text{ and }\Phi_y \text{ are block lower-triangular} \label{MHE_lower_triangle}
    \end{align}\label{eq_MHE_problem}
    \end{subequations}
where the minimization of estimation errors is performed for a $T$-long trajectory in closed-loop rather than at a single snapshot. A merit of the SLS framework lies in the transformation of the non-convex problem of designing the observer gain $L$ into a convex program over the closed-loop response $(\Phi_x,\Phi_y)$ \cite{anderson2019system}. Under the Gaussian assumption, i.e., $\mathbb{P^{\star}}=\mathcal{N}(\mu,\Sigma)$, the objective \eqref{MHE_MSE} becomes:
\begin{equation}
    \mathbb{E}_{ \xi\sim  \mathbb{P^{\star}}}\left[ \lVert \bar e \rVert^2 \right]= \left\lVert  \begin{bmatrix}
       \Phi_x&\Phi_y
   \end{bmatrix} \begin{bmatrix}
        \bar B\\ -\bar D
   \end{bmatrix} \Sigma^{1/2} \right\rVert_F^2. \label{eq_H2_MSE}
\end{equation}
The $\mathcal{H}_2$ observer gain is given by $L^{\star}={(\Phi_x^{\star})}^{-1}\Phi^{\star}_y$, which is attainable from the optimal solution
\begin{equation}
    \begin{aligned}
  \left\{\Phi_x^{\star},\Phi^{\star}_y  \right\}= \arg \min_{\Phi_x,\Phi_y}
    &~\eqref{eq_H2_MSE} 
    \\
   {\rm  s.t.}&~  \eqref{MHE_structure}, \eqref{MHE_lower_triangle}.
\end{aligned} \label{eq_H2}
\end{equation}
In particular, if the initial estimation error $e(t_0)$ and disturbance process $\left\{w(i)\right\}_{i=t_0}^{t_0+T-1}$ are mutually independent, rendering a block-diagonal $\Sigma$, the observer gain solved from \eqref{eq_H2} turns out to equal to the well-known Kalman gain \cite{brouillon2023minimal}.

\section{Main Results} \label{mainr} 
\subsection{Sinkhorn DRSE Design Problem}
While the Gaussian assumption offers its computational tractability, its validity is challenged in engineering practice. When the real-world disturbances exhibit non-Gaussian characteristics, such as heavy tails and multi-modality, the accuracy and reliability of $\mathcal{H}_2$ design \eqref{eq_H2} can be compromised considerably \cite{stojanovic2016robust}. To cope with uncertainty in the disturbance distribution, we adopt the DRO paradigm to design a state estimator. Its core is to construct a so-called ambiguity set $\mathcal{D}$ to capture all admissible distributions and optimize the worst-case objective. Among various options of ambiguity sets, the Wasserstein ambiguity set has been extensively utilized due to its computational tractability and clear interpretability \cite{mohajerin2018data,gao2024wasserstein}. 
\begin{definition}[Wasserstein distance, \cite{kantorovich1958space}]
Consider a supported set $\Xi$ and distributions $\mathbb{P}\in \mathcal{P}{(\Xi)},~\mathbb{P}'\in \mathcal{P}{(\Xi)}$ , the Wasserstein distance between $\mathbb{P}$ and $\mathbb{P}'$ is defined as:
\begin{equation*}
\mathcal{W}(\mathbb{P},{\mathbb{P}}') =  \inf_{\pi\in \Pi(\mathbb{P},\mathbb{P}')}\mathbb{E}_{(\xi,\xi')\sim\pi} \left[ c(\xi,\xi')  \right],
\end{equation*}
where $\Pi \subseteq \mathcal{P}{(\Xi\times\Xi)}  $ is the transport plan composed of all joint distributions of $\xi$ and $\xi'$ with marginal distributions $\mathbb{P}$ and $\mathbb{P}'$, respectively. The transport cost $c(\xi,\xi')\geq 0$ is a lower semi-continuous function used to measure the distance between $\xi$ and $\xi'$, satisfying $c(\xi,\xi')= 0$ if and only if $\xi=\xi'$. In this work, we use the squared $2$-norm $c(\xi,\xi') = \lVert \xi-\xi'\rVert^2$ as the transport cost.
\end{definition}

The Wasserstein distance quantifies the difference between two distributions via optimal transport. On this basis, the Wasserstein ambiguity set can be defined as a ball in $\mathcal{P}(\Xi)$ centered at a reference distribution. 

\begin{definition}[Wasserstein ambiguity set, \cite{mohajerin2018data}] \label{def_amb_W}
Given $N$ i.i.d. samples of uncertainty $\{ \hat\xi_i\}_{i=1}^N$, the Wasserstein ambiguity set $\mathcal{D}(\hat{\mathbb{P}}_N,\theta)$ is defined as:
\begin{equation}
    \mathcal{D}(\hat{\mathbb{P}}_N,\theta) = \left\{\mathbb{P}\in \mathcal{P}(\Xi)\left|\mathcal{W}(\mathbb{P},\hat{\mathbb{P}}_{N})\leq\theta\right.\right\},
\end{equation}
where $\hat{\mathbb{P}}_{N} = \frac{1}{N} \sum_{i=1}^{N} \delta_{\hat{\xi}_i}$  is the empirical distribution and $\theta$ is the radius of Wasserstein ball.
\end{definition}

The Wasserstein DRO with $\mathcal{D}(\hat{\mathbb{P}}_N,\theta)$ has proven to be asymptotically equivalent to variation regularization \cite{gao2024wasserstein}, which has played a central role in a plenty of machine learning and statistical regression techniques \cite{hoerl1970ridge, park2008bayesian, tian2022comprehensive}. However, it is shown in \cite{gao2023distributionally,yue2022linear} that the worst-case distribution is supported on at most $N+1$ discrete points. This often deviates from the underlying continuity of probability distributions in reality and gives rise to a conservative decision. To alleviate over-conservatism, the Sinkhorn distance, as an entropy-regularized variant of the Wasserstein distance, has been utilized to construct ambiguity sets in DRO.

\begin{definition}[Sinkhorn distance, \cite{wang2025sinkhorn}] \label{def_sink}
Consider a measurable support set $\Xi$ and distributions $\mathbb{P},\mathbb{P}'\in \mathcal{P}{(\Xi)}$, and let $\alpha\in \mathcal{M}(\Xi),~\beta \in \mathcal{M}(\Xi)$ be two reference measures such that $\mathbb{P}\ll \alpha$, $\mathbb{P}'\ll \beta$. Given the entropic regularization parameter $\epsilon \geq 0$, the Sinkhorn distance between distributions $\mathbb{P}$ and $\mathbb{P}'$ is defined as:
\begin{equation}
\begin{split}
    &~ \mathcal{W}_{\epsilon}(\mathbb{P},{\mathbb{P}}') \\
    = & \inf_{\pi\in \Pi(\mathbb{P},\mathbb{P}')}\left\{\mathbb{E}_{(\xi,\xi')\sim\pi} \left[ c(\xi,\xi')  \right]+\epsilon {\rm KL}\left( \pi| \alpha \odot\beta \right)
\right\},
\end{split} \label{eq_def_sink}
\end{equation}
where $\Pi \subseteq \mathcal{P}( \Xi\times\Xi)$ contains all joint distributions with marginal distributions $\mathbb{P}$ and $\mathbb{P}'$, $c(\xi,\xi')=\lVert \xi-\xi'\rVert^2$ denotes the the transport cost function, and ${\rm KL}\left( \pi| \alpha \odot\beta \right)$ is the KL-divergence of $\pi$ with respect to the product measure $ \alpha \odot\beta $:
\begin{equation}
      {\rm KL}\left( \pi | \alpha \odot\beta \right)= \mathbb{E}_{(\xi,\xi')\sim\pi} \left[ \log \frac{{\rm d}\pi(\xi,\xi') }{{\rm d} \alpha(\xi){\rm d} \beta(\xi')} \right].
\end{equation}
Here, $ \log \frac{{\rm d}\pi(\xi,\xi') }{{\rm d} \alpha(\xi){\rm d} \beta(\xi')}$ stands for the density ratio of $\pi$ with respect to $ \alpha \odot\beta $ evaluated at $(\xi,\xi')$. 
\end{definition}

As a genearalization of the Wasserstein ball $\mathcal{D}(\hat{\mathbb{P}}_N,\theta)$, the Sinkhorn ambiguity set is defined as follows.  

\begin{definition}[Sinkhorn ambiguity set, \cite{mohajerin2018data}] \label{def_amb_S}
Given the empirical distribution $\hat{\mathbb{P}}_N$ defined in Definition \ref{def_amb_W} and two reference distributions $\alpha\in \mathcal{M}(\Xi)$, $\beta\in \mathcal{M}(\Xi)$ satisfying $\hat{\mathbb{P}}_N\ll \alpha$, $\mathbb{P}\ll \beta$, the Sinkhorn ambiguity set is defined as:
\begin{equation}
    \mathcal{D}_{\epsilon}(\hat{\mathbb{P}}_N,\theta) = \left\{\mathbb{P}\in \mathcal{M}(\Xi)\left|\mathcal{W}_{\epsilon}(\mathbb{P},\hat{\mathbb{P}}_{N})\leq\theta\right.\right\}. \label{eq_ambi_sinkhorn}
\end{equation}
 
\end{definition}

Intuitively, the entropic regularizer in $\mathcal{D}_{\epsilon}(\hat{\mathbb{P}}_N,\theta)$ imposes a penalty on the transport plan $\pi$ different from  $ \alpha \odot\beta $ to shrink $\mathcal{D}(\hat{\mathbb{P}}_N,\theta)$. Because a candidate distribution $\mathbb{P} $ shares the same support as the reference $\beta$, using a continuous $\beta$ can effectively avoid a sparse $\pi$, thereby focusing on smooth and continuous distributions that are more realistic. It is suggested in \cite{wang2025sinkhorn} that one can choose $\beta$ to be the Gaussian measure or Lebesgue measure if we consider the true distribution $\mathbb{P}^{\star}$ is known to be continuous. As for the selection of $\alpha$, it has been proved that arbitrary choice of $\alpha$ fully supported on $\hat\xi_i$ for $i=\mathbb{N}_{1:N}$ is equivalent up to a constant, i.e., ${\rm KL}\left( \pi| \alpha \odot\beta \right)={\rm KL}\left( \pi| \alpha' \odot\beta \right)+{\rm KL}\left( \alpha'| \alpha  \right)$, because the empirical distribution $\hat{\mathbb{P}}_N$ is fixed \cite{wang2025sinkhorn}. In this work, $\xi$ is believed to be continuously supported on $\mathbb{R}^{n_\xi}$, and thus we choose $\alpha=\hat{\mathbb{P}}_N$ and $\beta$ to be a Gaussian distribution $\mathcal{N}(0,\Sigma)$:
\begin{equation}
  {\rm d}\beta(\xi)=\frac{\exp{\left\{-{\xi^\top \Sigma^{-1}\xi}/{2}\right\}} }{\sqrt{(2\pi)^{n_\xi} {\rm det}(\Sigma)}}  {\rm d}\nu(\xi),   \label{eq_Gaussian}
\end{equation}
where we assume $\mu=0$ without loss of generality. Using the ambiguity set \eqref{eq_ambi_sinkhorn}, the Sinkhorn DRSE design problem is then formulated as the following min-max problem:
\begin{subequations} 
\begin{align}
    \min_{\Phi} \sup_{\mathbb{P}\in\mathcal{D}_{\epsilon}(\hat {\mathbb{P}}_N,\theta)} &~\mathbb{E}_{ \xi\sim  \mathbb{P}}[ \lVert \bar e \rVert^2 ]\\
   {\rm  s.t.}~~~&~  \bar e = \Phi 
   Q \xi\\
   &~  \Phi R=I,~\Phi= \begin{bmatrix}
        \Phi_x & \Phi_y
    \end{bmatrix} \\
   &~  \Phi_x \text{ and } \Phi_y \text{ are lower triangular} 
\end{align} \label{eq_primal}
\end{subequations}
where $R= \begin{bmatrix}
      (I-Z\bar A)^\top   &     (\bar CZ )^\top
\end{bmatrix}^\top$ and $ Q=\begin{bmatrix}
        \bar B^\top& -\bar D^\top
   \end{bmatrix}^\top$ are defined for succinctness.

\subsection{Finite-Sample Probabilistic Guarantee}
In this subsection, we establish a novel performance guarantees under two mild assumptions on the unknown true distribution $\mathbb{P}^{\star}$.

\begin{assumption}[\cite{mohajerin2018data}]\label{assu_light}
The distribution $\mathbb{P}^{\star}$ is a light-tailed distribution, i.e., there exists a constant $a>0$ satisfying
\begin{equation*}
\psi(a)= \mathbb{E}_{\xi \sim \mathbb{P}^{\star}}\left[\exp\{ \lVert\xi\rVert^a\}  \right]< \infty. 
\end{equation*}
\end{assumption}
\begin{assumption}\label{assu_nodirac}
   The true distribution admits a bounded probability density function $\mathbb{P}^{\star}(\xi)$ with respect to the Lebesgue measure, i.e., there exists a constant $b > 0$ such that
    \begin{equation*}
\sup_{\xi\in\Xi}  \mathbb{P}^{\star}(\xi) \leq b. 
    \end{equation*} 
\end{assumption}

Assumption \ref{assu_light} has appeared frequently in Wasserstein DRO, preventing that the sample average loss may not converge to the true expectation under a heavy-tailed $\mathbb{P}^{\star}$ \cite{shafieezadeh2018wasserstein,brownlees2015empirical}. In addition, an infinite spike density $\delta_{\tilde{\xi}}$ on any $\tilde{\xi}$ for $\mathbb{P}^{\star}$ is physically unrealistic, and can be eliminated by Assumption \ref{assu_nodirac}. In this way, $\mathbb{P}^{\star}$ is supposed to be absolutely continuous wihtout any discrete components. We are now ready to establish the finite-sample probabilistic guarantee of the Sinkhorn ambiguity set $\mathcal{D}_{\epsilon}(\hat{\mathbb{P}}_N,\theta)$.

\begin{theorem} \label{theo_fin}
Under Assumptions \ref{assu_light} and \ref{assu_nodirac}, for a prescribed confidence level $\eta \in (0,1)$ and sample size $N$, one can construct an ambiguity set $\mathcal{D}_{\epsilon}(\hat {\mathbb{P}}_N,\theta_N(\eta))$ by setting the ball radius to $\theta_N(\eta)$, such that 
\begin{equation*}
    \mathbb{P}^{N}\left\{\mathbb{P} ^{\star} \in  \mathcal{D}_{\epsilon}(\hat {\mathbb{P}}_N,\theta_N(\eta))\right\} \geq 1-\eta,
\end{equation*}
with the radius
\begin{equation}
\begin{split}
      &~~~  \theta_N(\eta)\\
&=\begin{cases}
\left( \frac{\log\left( c_1 \eta^{-1} \right)}{c_2N}\right)^{\frac{1}{\max\{n_\xi,2\}}}+\epsilon m, & \text{if } N \geq  \frac{\log\left( c_1 \eta^{-1} \right)}{c_2N}\\
\left( \frac{\log\left( c_1 \eta^{-1} \right)} {c_2N}\right)^{a} +\epsilon m,  & \text{if } N <  \frac{\log\left( c_1 \eta^{-1} \right)}{c_2N} \\
\end{cases}~,
\end{split} \label{eq_theo_fin_thetaN}  
\end{equation}
where $c_1>0$ and $c_2>0$ are constants depending only on $(\psi(a),a,n_\xi)$, and 
\begin{equation}
 \begin{split}
   m&=  \log N+\log b+\frac{n_\xi}{2}\log(2\pi) + \frac{1}{2}\log \det\left(\Sigma^{\star}\right)  \\
   & ~~~~+\frac{1}{2} (\mu^{\star})^\top \Sigma^{-1} {\mu^{\star}}+\frac{1}{2}\log\det\left(\Sigma \right) + \frac{1}{2}{\rm Tr}\left\{\Sigma^{-1} \Sigma^{\star} \right\}.\end{split} \label{eq_theo_fin_m}
\end{equation}

\begin{proof}
First, recall from Definition \ref{def_sink} that the Sinkhorn distance consists of a transportation cost and an entropic regularization term using the KL-divergence. Let $\pi$ be the joint distribution of the empirical distribution $\hat{\mathbb{P}}_N$ and the true distribution $\mathbb{P}^{\star}$. The KL-divergence can be reformulated as:
\begin{equation}
\begin{aligned}
   &~~~~ {\rm KL}\left( \pi| \alpha \odot\beta \right)= \mathbb{E}_{(\xi,\xi')\sim\pi} \left[ \log \frac{{\rm d}\pi(\xi,\xi') }{{\rm d} \alpha(\xi){\rm d} \beta(\xi')} \right]\\
     &= \mathbb{E}_{(\xi,\xi')\sim\pi} \left[ \log \frac{{\rm d}\pi(\xi,\xi') }{{\rm d} \hat{\mathbb{P}}_N(\xi){\rm d} \mathbb{P}^{\star}(\xi')}  + \log \frac{{\rm d}\mathbb{P}^{\star}(\xi')}{{\rm d} \beta(\xi')}  \right]\\
    &=\mathbb{E}_{(\xi,\xi')\sim\pi} \left[ \log \frac{{\rm d}\pi(\xi,\xi') }{{\rm d} \hat{\mathbb{P}}_N(\xi){\rm d} \mathbb{P}^{\star}(\xi')}  \right] +  \mathbb{E}_{\xi'\sim \mathbb{P}^{\star}}\left[\log \frac{{\rm d}\mathbb{P}^{\star}(\xi')}{{\rm d} \beta(\xi')}  \right]\\
     &= \underbrace{\mathbb{E}_{(\xi,\xi')\sim\pi} \left[ \log \frac{{\rm d} {\mathbb{P}}^{\star}(\xi){\rm d}\pi(\xi|\xi') }{{\rm d} \hat{\mathbb{P}}_N(\xi){\rm d} \mathbb{P}^{\star}(\xi')}  \right]}_{I(\xi,\xi') \text{ with } (\xi,\xi')\sim\pi }+ {\rm KL}(\mathbb{P}^{\star} | \beta)
\end{aligned}\label{eq_theo_fin_sink1}
\end{equation}
 where $I(\xi,\xi')$ is the mutual information between $\xi$ and $\xi'$, which can be decomposed into the marginal entropy $H(\hat{\mathbb{P}}_N)$ and conditional entropy $H(\xi|\xi' )$ with $(\xi,\xi')\sim\pi$. Due to $\hat{\mathbb{P}}_{N} = \frac{1}{N} \sum_{i=1}^{N} \delta_{\hat{\xi}_i}$ and $H(\xi|\xi' )\geq0$, we can obtain 
\begin{equation}
    \begin{aligned}
       I(\xi,\xi')&= \mathbb{E}_{\xi\sim\hat{\mathbb{P}}_N} \left[ \log {{\rm d}\pi(\xi|\xi') }\right]-\mathbb{E}_{(\xi,\xi')\sim\pi} \left[ \log {{\rm d} \hat{\mathbb{P}}_N(\xi)}  \right]\\
      & = H(\hat{\mathbb{P}}_N)-H(\xi |\xi')\\
&      \leq    \log N.
\end{aligned} \label{eq_theo_fin_I}
\end{equation}
Next, substituting \eqref{eq_theo_fin_sink1} and \eqref{eq_theo_fin_I} into \eqref{eq_def_sink} yields the upper bound of the Sinkhorn distance: 
\begin{equation}
\begin{aligned}
&~~~~\mathcal{W}_{\epsilon}(\hat{\mathbb{P}}_N,{\mathbb{P}}^{\star}) \\
&\leq\inf_{\pi\in \Pi(\hat{\mathbb{P}}_N,{\mathbb{P}}^{\star})}\left\{\mathbb{E}_{(\xi,\xi')\sim\pi} \left[ c(\xi,\xi')  \right]+\epsilon [\log N+{\rm KL}(\mathbb{P}^{\star} | \beta)]
\right\} \\
&= {\inf_{\pi\in \Pi(\hat{\mathbb{P}}_N,{\mathbb{P}}^{\star})}\left\{\mathbb{E}_{(\xi,\xi')\sim\pi} \left[ c(\xi,\xi')  \right]\right\}}+\epsilon [\log N+{\rm KL}(\mathbb{P}^{\star} |\beta)]\\
&={\mathcal{W}(\hat{\mathbb{P}}_N,{\mathbb{P}}^{\star})}+\epsilon [\log N+{\rm KL}(\mathbb{P}^{\star} | \beta)].
\end{aligned}\label{eq_theo_fin_sink2}
\end{equation}
Under Assumption \ref{assu_light}, for any integer $k >0$ there exists a constant $a' > 0$ such that $a'\cdot\lVert \xi \rVert^k <  \exp\{ \lVert\xi\rVert^a\}$ holds for sufficiently large $\lVert \xi \rVert$, implying that the $k$th moment is finite. This allows us to write its mean and covariance as $\mathbb{E}_{\xi \sim \mathbb{P}^{\star}}[\xi] = \mu^{\star}$ and
$\mathbb{E}_{\xi \sim \mathbb{P}^{\star}}[(\xi-\mu^{\star})(\xi-\mu^{\star})^\top] = \Sigma^{\star}$. In virtue of the Pythagorean relation \cite[Theorem 2.8]{ay2017information}, we have: 
\begin{equation}
    {\rm KL}(\mathbb{P}^{\star} |\beta)
    =  {\rm KL}(\beta^{\star} |\beta)+{\rm KL}(\mathbb{P}^{\star} |\beta^{\star}), \label{eq_theo_fin_KL1}
\end{equation}
where $\mathcal{G}$ stands for the Gaussian family, and $\beta^{\star} = \arg\min_{\bar\beta \in \mathcal{G}} {\rm KL}(\mathbb{P}^{\star} | \bar\beta)$ can be seen as a projection of $\beta$ onto $\mathcal{G}$.
Since $\mathcal{G}$ is a subset of the exponential family, $\beta^{\star}$ can be uniquely determined by matching the sufficient statistics of $\mathbb{P}^{\star}$ corresponding to the first two moments, i.e., $\beta^{\star}=\mathcal{N}(\mu^{\star}, \Sigma^{\star})$. Furthermore, the KL-divergence ${\rm KL}(\beta^{\star} |\beta)$ between two Gaussian distributions admits a closed-form expression \cite[Appendix A.5]{seeger2004gaussian}:
\begin{equation}
\begin{split}
    & ~~~ {\rm KL}(\beta^{\star}|\beta) \\
    &    = \frac{1}{2} \left[(\mu^{\star})^\top \Sigma^{-1} {\mu^{\star}}-\log\det\left(\Sigma^{-1} \Sigma^{\star} \right) + {\rm Tr}\left\{\Sigma^{-1} \Sigma^{\star} \right\}-n_\xi\right]. 
\end{split}
\label{eq_theo_fin_KL2}
\end{equation}
Regarding the remaining term, Assumption \ref{assu_nodirac} implies a lower bound $H(\mathbb{P}^{\star}) \geq -\log b$, and thus we have: 
\begin{equation}
         {\rm KL}(\mathbb{P}^{\star}|\beta^{\star}) 
 =H(\mathbb{P}^{\star}, \beta^{\star})  -H(\mathbb{P}^{\star})\leq H(\mathbb{P}^{\star}, \beta^{\star})  +\log b,  \label{eq_theo_fin_KL3}
\end{equation}
where the cross-entropy $H(\mathbb{P}^{\star}, \beta^{\star})$ is given by:
\begin{equation}
    \begin{aligned}
&~~~~H(\mathbb{P}^{\star}, \beta^{\star})\\
&= - \mathbb{E}_{\xi \sim \mathbb{P}^{\star}}\left[ \log \beta^{\star}(\xi) \right] \\
 &= - \mathbb{E}_{\xi \sim \mathbb{P}^{\star}}\left[ \log \left( \frac{\exp \left\{{-\frac{1}{2}(\xi - \mu^{\star})^\top ({\Sigma^{\star}})^{-1} (\xi - \mu^{\star}) }\right\}}
 {\sqrt{(2\pi)^{n_\xi} \det(\Sigma^{\star})}}  \right) \right] \\
 &=  \mathbb{E}_{\xi \sim \mathbb{P}^{\star}}\left[ \frac{1}{2}(\xi - \mu^{\star})^\top (\Sigma^{\star})^{-1} (\xi - \mu^{\star})\right]\\
 &~~~~~\qquad \qquad \quad+ \mathbb{E}_{\xi \sim \mathbb{P}^{\star}}\left[\frac{1}{2}\log \det(\Sigma^{\star}) + \frac{n_\xi}{2}\log(2\pi) \right] \\
  &=\frac{n_\xi}{2}\log(2\pi) + \frac{1}{2}\log \det(\Sigma^{\star}) + \frac{n_\xi}{2} .
\end{aligned} \label{eq_theo_fin_KL4}
\end{equation}
Thus, combining \eqref{eq_theo_fin_KL1}-\eqref{eq_theo_fin_KL4} yields $ {\rm KL}(\mathbb{P}^{\star} |\beta) \leq m-\log N$. Then, substituting this result into \eqref{eq_theo_fin_sink2} establishes an upper bound on the Sinkhorn distance as:
\begin{equation*}
{ \mathcal{W}}_{\epsilon}(\hat{\mathbb{P}}_N,{\mathbb{P}}^{\star})  \leq   \mathcal{W}(\hat{\mathbb{P}}_N,{\mathbb{P}}^{\star}) + \epsilon m.
\end{equation*}
By invoking the finite-sample guarantee of Wasserstein ball \cite[Theorem 2]{fournier2015rate}, we arrive at:
\begin{equation}
    \begin{aligned}
&    \mathbb{P}^N\left\{{\mathcal{W}}_{\epsilon}\left( \hat{\mathbb{P}}_N,\mathbb{P}^{\star} \right) \geq \theta +\epsilon m\right\}\\
\leq &
\begin{cases}
c_1 \exp\left\{ -c_2 N \theta ^{\max\{n_{\xi},2\}} \right\}, & \text{if } \theta\le 1\\
c_1 \exp\left\{  -c_2 N \theta ^{a} \right\}, & \text{if } \theta> 1
\end{cases}
\end{aligned} \label{eq_theo_fin_last}
\end{equation}
Finally, equating the right-hand side of \eqref{eq_theo_fin_last} to $\eta$ and solving for $\theta = \theta_N(\eta)$ complete the proof.
\end{proof}
\end{theorem}

\begin{remark}
Theorem \ref{theo_fin} generalizes the classical finite-sample probabilistic guarantee of the Wasserstein ambiguity set in \cite[Theorem 3.4]{mohajerin2018data}, which can be seen as a special case of Theorem \ref{theo_fin} by setting $\epsilon=0$.
\end{remark}

As an immediate consequence of Theorem \ref{theo_fin}, the performance guarantee of solving the Sinkhorn DRSE design problem \eqref{eq_primal} can be attained.

\begin{corollary} \label{coro}
Given the optimal design $\Phi^{\star}=\begin{bmatrix}
    \Phi_x^{\star}&\Phi_y^{\star}
\end{bmatrix}$ and the optimal value $J^{\star}$ of the Sinkhorn DRSE problem \eqref{eq_primal} with radius $\theta=\theta_N(\eta)$, the following bound on the estimation performance holds under the underlying true distribution $\mathbb{P}^{\star}$: 
\begin{equation*}
\mathbb{P}^N\left\{ \mathbb{E}_{\xi\sim\mathbb{P}^{\star}} \left[\left\lVert \bar e \left({(\Phi_x^{\star})}^{-1}\Phi^{\star}_y,\xi \right) \right\rVert^2\right] \geq J^{\star}\right\}
\leq \eta.
\end{equation*}
\end{corollary}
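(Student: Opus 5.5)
The argument is the standard one that converts a coverage guarantee into a performance guarantee. Fix the data-driven radius $\theta=\theta_N(\eta)$ from Theorem \ref{theo_fin}, and consider the event
\[
\mathcal{E}=\left\{\mathbb{P}^{\star}\in\mathcal{D}_{\epsilon}(\hat{\mathbb{P}}_N,\theta_N(\eta))\right\}.
\]
By Theorem \ref{theo_fin}, $\mathbb{P}^N\{\mathcal{E}\}\geq 1-\eta$. The plan is to show that on $\mathcal{E}$ the true MSE of the optimal design never exceeds $J^{\star}$. Then the event in the corollary is contained in the complement of $\mathcal{E}$, and its probability is at most $\eta$.

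\textbf{Step 1: identify the error under the optimal design.} Under $\Phi^{\star}$ we have $\bar e=\Phi^{\star}Q\xi$. The observer gain $L^{\star}=(\Phi_x^{\star})^{-1}\Phi_y^{\star}$ is well defined because the achievability condition \eqref{eq_sls_constraint} together with block lower-triangularity forces the diagonal blocks of $\Phi_x^{\star}$ to be identities, so $\Phi_x^{\star}$ is invertible. By the SLS parameterization \eqref{eq_SLS}, the closed-loop error $\bar e(L^{\star},\xi)$ therefore coincides with $\Phi^{\star}Q\xi$.

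\textbf{Step 2: bound the true MSE by the worst case.} By definition,
\[
J^{\star}=\sup_{\mathbb{P}\in\mathcal{D}_{\epsilon}(\hat{\mathbb{P}}_N,\theta_N(\eta))}\mathbb{E}_{\xi\sim\mathbb{P}}\left[\lVert\Phi^{\star}Q\xi\rVert^2\right].
\]
On $\mathcal{E}$, the true distribution $\mathbb{P}^{\star}$ is one feasible element of this supremum, so
\[
\mathbb{E}_{\xi\sim\mathbb{P}^{\star}}\left[\lVert \bar e\rVert^2\right]\leq J^{\star}.
\]
This gives $\{\mathbb{E}_{\mathbb{P}^{\star}}[\lVert\bar e\rVert^2]>J^{\star}\}\subseteq\mathcal{E}^c$, and hence probability at most $\eta$.

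\textbf{Main obstacle: the boundary case.} The statement uses $\geq J^{\star}$ rather than $>J^{\star}$, so the case of equality needs separate handling. Strictly, the inclusion argument only bounds the strict inequality, and the equality case cannot be excluded in general. I would first try to argue that the boundary event $\{\mathbb{E}_{\mathbb{P}^{\star}}[\lVert\bar e\rVert^2]=J^{\star}\}$ is absorbed by the inequality in \eqref{eq_theo_fin_last}, which already controls $\{\mathcal{W}_\epsilon\geq\theta+\epsilon m\}$ with a non-strict inequality. Failing that, I would read the corollary with the strict inequality, as is conventional in \cite{mohajerin2018data}.

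Two smaller technical points also need checking. First, the objective must be measurable in the samples, so that the probability $\mathbb{P}^N\{\cdot\}$ makes sense. Second, $J^{\star}$ must be finite and attained by some $\Phi^{\star}$; this is assumed in the statement.
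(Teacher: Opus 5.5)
Your main argument is the one the paper intends. The paper gives no separate proof and only calls the corollary an immediate consequence of Theorem~\ref{theo_fin}: on the coverage event $\mathbb{P}^{\star}$ is feasible for the inner supremum, so the true MSE of $\Phi^{\star}$ cannot exceed $J^{\star}$. Your Step~1 (identity diagonal blocks of $\Phi_x^{\star}$, hence $(I-Z\bar A+L^{\star}\bar CZ)^{-1}=\Phi_x^{\star}$ and $\Phi_x^{\star}L^{\star}=\Phi_y^{\star}$) correctly fills in a detail the paper leaves implicit.

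The boundary issue you flag is genuine, and the paper passes over it too. Coverage of the closed ball only gives $\mathbb{P}^N\{\mathbb{E}_{\mathbb{P}^{\star}}[\lVert\bar e\rVert^2]>J^{\star}\}\le\eta$, and your fallback of reading the claim with a strict inequality changes the statement rather than proving it. Your first option is the right start but is not yet an argument. From \eqref{eq_theo_fin_last}, which is in the proof rather than the statement of Theorem~\ref{theo_fin}, you get that with probability at least $1-\eta$ the Sinkhorn distance between $\mathbb{P}^{\star}$ and $\hat{\mathbb{P}}_N$ is \emph{strictly} below $\theta_N(\eta)$. You still need to show that an interior point of the ball cannot attain the supremum, and a perturbation argument does this.

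Take $\mathbb{G}_s=\mathcal{N}(0,sI)$. The product coupling $\hat{\mathbb{P}}_N\otimes\mathbb{G}_s$ has finite transport cost and, since $\alpha=\hat{\mathbb{P}}_N$, entropic term ${\rm KL}(\mathbb{G}_s|\beta)<\infty$, so $\mathbb{G}_s$ lies at finite Sinkhorn distance from $\hat{\mathbb{P}}_N$. The cost term is linear and the KL term convex in the coupling, and mixing couplings mixes their marginals, so $\mathbb{P}\mapsto\mathcal{W}_\epsilon(\mathbb{P},\hat{\mathbb{P}}_N)$ is convex. Hence $\mathbb{P}_t=(1-t)\mathbb{P}^{\star}+t\,\mathbb{G}_s$ stays in $\mathcal{D}_{\epsilon}(\hat{\mathbb{P}}_N,\theta_N(\eta))$ for small $t>0$.

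Next, $\Phi^{\star}Q\neq 0$. Its first block column equals that of $\Phi_x^{\star}$, because $\bar B$ starts with $I_{n_x}$ and $\bar D$ with a zero block, and the top block of that column is $I$ by your Step~1. So $\mathbb{E}_{\mathbb{G}_s}[\lVert\Phi^{\star}Q\xi\rVert^2]=s\lVert\Phi^{\star}Q\rVert_F^2$ exceeds $\mathbb{E}_{\mathbb{P}^{\star}}[\lVert\Phi^{\star}Q\xi\rVert^2]$, which is finite by Assumption~\ref{assu_light}, once $s$ is large. This gives $J^{\star}\ge\mathbb{E}_{\mathbb{P}_t}[\lVert\Phi^{\star}Q\xi\rVert^2]>\mathbb{E}_{\mathbb{P}^{\star}}[\lVert\Phi^{\star}Q\xi\rVert^2]$. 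Therefore the event in the corollary is contained in $\{\mathcal{W}_{\epsilon}(\hat{\mathbb{P}}_N,\mathbb{P}^{\star})\ge\theta_N(\eta)\}$, whose probability is at most $\eta$ by \eqref{eq_theo_fin_last}. This proves the statement as written, with $\geq$.
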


The theoretical bound $ \theta_N(\eta)$ established above is inevitably subject to over-pessimism. Therefore, it is practically advisable to use data-driven methods, e.g., cross-validation, to fine-tune the radius $\theta$ rather than directly using $ \theta_N(\eta)$ in \eqref{eq_theo_fin_thetaN}.

\subsection{Tractable Reformulation of Sinkhorn DRSE Problem}
The Sinkhorn DRSE design problem \eqref{eq_primal} is formulated as a min-max optimization involving a worst-case expectation. For tractability, a finite-dimensional convex programming reformulation is derived.

\begin{theorem} \label{theo_dual}
The inner maximization subproblem $\sup_{\mathbb{P}\in\mathcal{D}_{\epsilon}(\hat {\mathbb{P}}_N,\theta)} ~\mathbb{E}_{ \xi\sim  \mathbb{P}}\left[ \lVert \Phi 
   Q \xi \rVert^2 \right]$ in the primal problem \eqref{eq_primal} is feasible if and only if  
\begin{equation}
\begin{aligned}
     \theta \geq   \theta_0 &  \triangleq \frac{\epsilon}{2}\log \det \Sigma - \frac{\epsilon n_\xi}{2}\log\left(\frac{\epsilon}{2}\right)\\
  & ~~~~~ +\frac{\epsilon}{2}\log \det \Omega + \frac{1}{N}\sum_{i=1}^N \hat{\xi}_i^\top \left(I-\Omega^{-1}\right)\hat{\xi}_i 
    , \label{eq_theo_dual_fea}
\end{aligned}
\end{equation}
and admits the following exact reformulation:
 \begin{subequations}
\begin{align}
\begin{split}
      \inf_{\Phi, P, q_i,\lambda\geq0} &~ \lambda \theta-\frac{\lambda\epsilon}{2}\log \det \Sigma + \frac{\lambda\epsilon n_\xi}{2}\log\left(\frac{\lambda\epsilon}{2}\right)\\
    &~~~~~~~-\frac{\lambda\epsilon}{2}\log \det\left(\lambda \Omega -P\right)+\frac{1}{N}\sum^N_{i=1}q_i 
\end{split}   \label{eq_main_dual_O}\\
    {\rm s.t.}~~~ &~ \begin{bmatrix}
       \lambda\Omega - P  & \lambda\hat{\xi}_i \\ 
       \lambda\hat{\xi}_i^\top & \lambda \hat{\xi}_i^\top \hat{\xi}_i + q_i
    \end{bmatrix} \succeq 0,~ i\in \mathbb{N}_{1:N}\label{eq_main_dual_C1} \\
    &~ \begin{bmatrix}
        P & Q^\top \Phi^\top \\
       \Phi Q & I
    \end{bmatrix} \succeq 0 \label{eq_main_dual_C2} \\
       &~ \lambda\Omega - P \succ 0 \label{eq_main_dual_C3}
\end{align} \label{eq_main_dual}
\end{subequations}
with  $\Omega = I +\frac{ \epsilon}{2}\Sigma^{-1}\succeq0$.

 \begin{proof}
The proof can be made by adapting \cite[Proposition 1]{cescon2025data} and \cite[Theorem 1]{wang2025sinkhorn} to the present setup and is thus omitted for brevity. 
\end{proof}
\end{theorem}

Based on Theorem \ref{theo_dual}, one can reformulate the primal problem \eqref{eq_primal} as follows:
 \begin{subequations}
\begin{align}
\begin{split}
      \min_{\Phi, P, q_i,\lambda\geq0} &~ \lambda \theta-\frac{\lambda\epsilon}{2}\log \det \Sigma+ \frac{\lambda\epsilon n_\xi}{2}\log\left(\frac{\lambda\epsilon}{2}\right)  \\
   &~~~~~~ -\frac{\lambda\epsilon}{2}\log \det\left(\lambda \Omega -P\right)+\frac{1}{N}\sum^N_{i=1}q_i 
\end{split}   \label{eq_reform_O}\\
    {\rm s.t.}~~~   &~ \lambda\Omega - P \succeq \kappa I \label{eq_reform_C1}\\
      &~  \Phi   R=I, \Phi= \begin{bmatrix}
        \Phi_x & \Phi_y
    \end{bmatrix}\label{eq_reform_C2} \\ 
    &~ \Phi_x \text{ and } \Phi_y \text{ are lower triangular}\label{eq_reform_C3}\\
&~\eqref{eq_main_dual_C1},~\eqref{eq_main_dual_C2} \label{eq_reform_C4}
\end{align} \label{eq_reform}
\end{subequations}
where $\kappa>0$ is a sufficiently small positive number to ensure \eqref{eq_main_dual_C3}. The dual reformulation \eqref{eq_main_dual} is a finite-dimensional convex program with LMI constraints, where the objective function is provably convex albeit nonlinear \cite{cescon2025data}. To shed light on the reduced conservatism of Sinkhorn DRSE, its worst-case distribution is characterized as follows.

\begin{theorem} \label{theo_worst}
Given the optimal solution $(\lambda^{\star},\Phi^{\star})$ of \eqref{eq_reform}, the worst-case distribution $\tilde{\mathbb{P}}(\xi')$ of the Sinkhorn DRSE \eqref{eq_primal} is 
\begin{equation}
\begin{aligned}
    {\rm d}\tilde{\mathbb{P}}(\xi')= \frac{1}{N} \sum_{i=1}^N v_i\cdot \exp\left\{  (\xi')^{\top} U \xi' + u_i^{\top}  \xi'\right\} \nu(\xi'), 
\label{eq_theo_worst}
\end{aligned}
\end{equation}
where 
\begin{equation}
\begin{aligned}
    U =&  \frac{ 1}{\lambda^\star \epsilon}\left[Q^\top {(\Phi^{\star})}^\top
   \Phi^{\star}   Q -\lambda^{\star}\Omega\right],~ 
    u_i =   \frac{ 2}{ \epsilon} \hat{\xi}_i^{\top},\\
    v_i = &  \frac{\sqrt{\det\left[\lambda^\star \Omega-Q^\top {(\Phi^{\star})}^\top
   \Phi^{\star}   Q\right]}}{{(\pi\lambda^\star\epsilon)}^{n_\xi/2}} \cdot \exp\left\{\frac{1}{\epsilon^2}\hat{\xi}_i^{\top} U^{-1} \hat{\xi}_i\right\}, \label{theo_worst_para}
\end{aligned}
\end{equation}
and $\nu(\zeta)$ is the Lebesgue measure supported on $\mathbb{R}^{n_\xi}$.

\begin{proof}         
According to \cite[Remark 4]{wang2025sinkhorn}, the worst-case distribution $ {\rm d}\tilde{\mathbb{P}}(\xi')$ of \eqref{eq_primal} always has the form
\begin{equation}
\begin{aligned}
\mathbb{E}_{\xi\sim\hat{\mathbb{P}}_N}\bigg[(\phi(\xi))^{-1}\exp \left\{ \frac{ \lVert \Phi^{\star}
   Q \xi' \rVert^2-\lambda^\star c(\xi,\xi')}{\lambda^\star \epsilon}\right\}\bigg]{\rm d}\beta(\xi') ,
\end{aligned} \label{eq_theo_worst_ref}
\end{equation}
where $\phi(\xi)=\mathbb{E}_{\zeta \sim \beta}\left[ \exp \left\{ ({ \lVert \Phi^{\star}
   Q \zeta \rVert^2-\lambda^\star\lVert\xi-\zeta\rVert^2})/{\lambda^\star \epsilon}\right\}\right]$ is a normalizing constant, and can be derived as:
\begin{align*}
  &\phi(\xi) \\
  =& \int_{\Xi}\exp\left\{\zeta^\top \frac{ Q^\top {(\Phi^{\star})}^\top
   \Phi^{\star}
   Q -\lambda^\star I}{\lambda^\star \epsilon} \zeta + \frac{2}{\epsilon}\xi^\top\zeta- \frac{1}{\epsilon}\xi^\top\xi \right\}{\rm d}\beta(\zeta) \\
  =&\frac{1}{\sqrt{(2\pi)^{n_\xi} {\rm det}(\Sigma)}}   \int_{\Xi}\exp\left\{\zeta^\top U \zeta + \frac{2}{\epsilon}\xi^\top\zeta- \frac{1}{\epsilon} \xi^\top\xi \right\}{\rm d}\nu(\zeta) \\
    =& \frac{{(\lambda^\star\epsilon)}^{n_\xi/2}}{\sqrt{\det(2\Sigma(\lambda^\star \Omega-Q))}}  \exp\left\{-\frac{1}{\epsilon^2}\xi^\top U^{-1}\xi-\frac{1}{\epsilon}\xi^\top\xi\right\},
   \end{align*}
where the second equality follows from substituting the density of the Gaussian measure $\beta$ in \eqref{eq_Gaussian}, and the third equality is obtained by the Gaussian integral. Substituting $\phi(\zeta)$ and $\hat{\mathbb{P}}_{N}$ into \eqref{eq_theo_worst_ref} yields \eqref{eq_theo_worst}, which completes the proof. 
     \end{proof}
\end{theorem}

Theorem \ref{theo_worst} highlights that the worst-case distribution $\tilde{\mathbb{P}}$ of the Sinkhorn DRSE is continuously supported on $\mathbb{R}^{\xi}$, thereby better characterizing realistic continuous distributions. 

\subsection{Limiting Properties}  \label{subsec_asy}
In this subsection, we elaborate on the limiting properties of the proposed Sinkhorn DRSE. We first consider the simple case of $\epsilon = 0$. In the absence of entropic regularization, the Sinkhorn DRSE problem \eqref{eq_primal} becomes the Wasserstein DRSE for any finite $\theta>0$. 
When $\epsilon=\theta=0$, \eqref{eq_primal} further boils down to a sample-average approximation problem, whose optimal solution is known to coincide with the $\mathcal{H}_2$ design \eqref{eq_H2} \cite{mandel2011convergence,lorentzen2011iterative}. 

Next, we proceed to the extremal case of $\epsilon \rightarrow \infty$. For notational brevity, we first write the objective \eqref{eq_main_dual_O} as a function $J\left(P, \lambda, q\right)$ and regroup its terms as follows: 
\begin{equation}
\begin{aligned}
      & J(P,\lambda,q)  =  \underbrace{\lambda \theta-\frac{\lambda\epsilon}{2}\log \det(\Sigma)}_{\triangleq f(\lambda)}
+\underbrace{\frac{1}{N}\sum^N_{i=1}q_i}_{\triangleq g \left(q\right),~q=[q_1~\cdots ~q_N]}\\
    &+  \underbrace{ \frac{\lambda\epsilon n_\xi}{2}\log\left(\frac{\lambda\epsilon}{2}\right)   -\frac{\lambda\epsilon}{2}\log \det\left(\lambda \Omega-P\right)}_{\triangleq h(P,\lambda)},
\end{aligned}
\end{equation}
The limiting behavior of the solution to \eqref{eq_primal} is characterized as follows.

\begin{theorem}\label{theo_eq2W}
When $\epsilon \xrightarrow {} \infty$ and $0\leq\theta/\epsilon<\infty$ , the Sinkhorn DRSE design problem \eqref{eq_primal} is feasible if and only if 
\begin{equation}
      \theta\geq{\rm Tr}\left\{\Sigma\right\}+\sum^N_{i=1}\hat{\xi}_i^\top \hat{\xi}_i,\label{eq_theo_eq2H_feas}
 \end{equation}
and yields the same optimal solution $(\Phi_x^{\star},\Phi_y^{\star})$ as that to the $\mathcal{H}_2$ design \eqref{eq_H2}. 

\begin{proof}
First, we recall from Theorem \ref{theo_dual} that the feasibility of \eqref{eq_primal} amounts to $\theta\geq\theta_0$. Next, we derive the limit of $\theta_0$ defined in \eqref{eq_theo_dual_fea} as ${\epsilon \rightarrow \infty}$: 
\begin{align*}
    \lim_{\epsilon \rightarrow \infty}\theta_0 
   & =    \lim_{\epsilon \rightarrow \infty}\frac{\epsilon}{2}\log \det\left(\Sigma+\frac{\epsilon}{2}I\right) - \frac{\epsilon }{2}\log \det \left(\frac{\epsilon}{2}I_{n_\xi}\right) \\
    &\qquad \quad \quad+ \frac{1}{N}\sum_{i=1}^N \hat{\xi}_i^\top \left[I-(I+\frac{\epsilon}{2}\Sigma)^{-1}\right]\hat{\xi}_i \\
   & = \lim_{\epsilon \rightarrow \infty} \frac{\epsilon}{2}\log \det\left(\frac{2}{\epsilon}\Sigma+I\right)\\
  &\qquad \quad \quad+\frac{1}{N}\sum_{i=1}^N \hat{\xi}_i^\top \left[I-(I+\frac{\epsilon}{2}\Sigma)^{-1}\right]\hat{\xi}_i  \\
     &   = {\rm Tr}\left\{\Sigma\right\}+\frac{1}{N}\sum_{i=1}^N \hat{\xi}_i^\top \hat{\xi}_i,
\end{align*}
where the third equation is obtained by taking the limit of the matrix Taylor expansion of the preceding expression \cite{horn2012matrix}. Consequently, we arrive at the feasibility condition \eqref{eq_theo_eq2H_feas}. 

Next, we study the optimal solution to the exact reformulation \eqref{eq_reform} when ${\epsilon \rightarrow \infty}$. Because $\sum_{i=1}^N{q_i}$ is minimized in the objective \eqref{eq_main_dual_O}, the linear matrix inequality (LMI) constraints \eqref{eq_main_dual_C1} are always active at the optimum, and thus the optimal solution $ q_i^{\star}=\left(\lambda^{\star}\right)^2 \hat{\xi}_i^\top (\lambda^{\star} \Omega - P^{\star})^{-1} \hat{\xi}_i - \lambda^{\star} \hat{\xi}_i^\top \hat{\xi}_i  $ can be obtained by applying the Schur complement. Thus we can rewrite the objective \eqref{eq_main_dual_O} as a function of $P$ and $\lambda$:
\begin{align}
       &{\hat J(P, \lambda)} =f(\lambda)+h( P,\lambda)+\hat g(P,\lambda) ,\label{eq_J_hat} \\
       &\hat g(P,\lambda) =  \frac{1}{N} \sum_{i=1}^N \lambda^2 \hat{\xi}_i^\top (\lambda \Omega - P)^{-1} \hat{\xi}_i - \lambda \hat{\xi}_i^\top \hat{\xi}_i . \label{eq_g_hat} 
    \end{align}
Before proceeding, we rewrite the term $\log \det(\lambda\Omega-P)$ as: 
\begin{align*}
    &~~~\log \det(\lambda\Omega-P)\\
   & =\log \det\left(\lambda \Sigma+ \frac{\lambda\epsilon}{2}\Sigma^{-1}-P\right)\\
   & =\log \det\left(\frac{\lambda\epsilon}{2}\Sigma^{-1} \left(\frac{2}{\epsilon} \Sigma+ I- \frac{2}{\lambda\epsilon}\Sigma P\right)\right)\\
  & =  \log \det\left(\frac{\lambda\epsilon}{2}\Sigma^{-1}\right)+ \log \det\left(\frac{2}{\epsilon} \Sigma+ I- \frac{2}{\lambda\epsilon}\Sigma P\right)\\
   &  = n_{\xi} \log\left(\frac{\lambda\epsilon}{2}\right)-\log \det(\Sigma)+ \log \det\left(\frac{2}{\epsilon} \Sigma+ I- \frac{2}{\lambda\epsilon}\Sigma P\right).
\end{align*}
Thus, we have:
\begin{align*}
&~~~~\lim_{\epsilon \rightarrow \infty}\hat{J}(P,\lambda)\\
&=  \lim_{\epsilon \rightarrow \infty} \lambda \theta-\frac{\lambda \epsilon}{2} \log \det\left(\frac{2}{\epsilon} \Sigma+ I- \frac{2}{\lambda\epsilon}\Sigma P\right)  
\\
&   ~~~~~~~~~~~+ \sum_{i=1}^N \frac{\lambda}{\epsilon}{\hat{\xi}_i^\top \left( \frac{1}{\epsilon}I+ \frac{1}{2}\Sigma^{-1}-\frac{1}{\lambda\epsilon}P\right)^{-1}\hat{\xi}_i }
-\lambda \hat{\xi}_i^\top \hat{\xi}_i\\
&=  \lim_{\epsilon \rightarrow \infty} \lambda \theta +  \frac{\lambda\epsilon}{2}{\rm Tr}\left\{  \frac{2}{\lambda\epsilon}\Sigma P-\frac{2}{\epsilon} \Sigma  \right\}  -\lambda \sum_{i=1}^N \hat{\xi}_i^\top \hat{\xi}_i \\
&=  {\rm Tr}\{  \Sigma P\}+\lambda \left [ \theta- {\rm Tr}\{\Sigma\} - \sum_{i=1}^N \hat{\xi}_i^\top \hat{\xi}_i \right ]
\end{align*}
Subsequently, when ${\epsilon \rightarrow \infty}$, \eqref{eq_reform} becomes:
\begin{subequations}
\begin{align}
        \inf_{\Phi,P, \lambda\geq0}&~{\rm Tr}\{  \Sigma P\}+ \lambda\left[\theta-{\rm Tr}(\Sigma)-\sum^N_{i=1}\hat{\xi}_i^\top \hat{\xi}_i\right] \label{theo_eq2W_problem_O}\\
{\rm s.t.}~~ 
    &~\begin{bmatrix}
        P&Q^\top \Phi^\top\\
       \Phi Q& I
    \end{bmatrix} \succeq0 \label{theo_eq2W_problem_C1}\\
    &~    {\lambda I+\frac{ \lambda\epsilon}{2}\Sigma^{-1}} -P\succeq \kappa I \label{theo_eq2W_problem_C2}\\
       &~\eqref{eq_reform_C2},\eqref{eq_reform_C3} \label{theo_eq2W_problem_C3}
\end{align} \label{theo_eq2W_problem}
\end{subequations}
Due to $ \theta-{\rm Tr}\{\Sigma\}-\sum^N_{i=1}\hat{\xi}_i^\top \hat{\xi}_i \geq 0$ in the objective, it must be that the minimum $\lambda^{\star}= \bar \sigma((\Sigma + {\epsilon} I /{2})^{-1}\Sigma (P+\kappa I))\rightarrow 0$ is attained when constraint \eqref{theo_eq2W_problem_C2} is active. Because $\theta/\epsilon<\infty$, the objective \eqref{theo_eq2W_problem_O} amounts to minimizing ${\rm Tr}\{ \Sigma P\}$, and it is immediate that $P^\star = Q^\top (\Phi^\star)^\top   \Phi^\star Q$ from the constraint \eqref{theo_eq2W_problem_C1}. As a result, the objective further simplifies to ${\rm Tr} \{ \Sigma Q^\top \Phi^\top \Phi Q \}=\lVert\Phi Q \Sigma^{1/2}\rVert^2_F$ and is thus tantamount to that of $\mathcal{H}_2$ design \eqref{eq_H2}. This completes the proof.
\end{proof}
\end{theorem}

Indeed, the feasibility condition \eqref{eq_theo_eq2H_feas} is equivalent to verifying whether the Gaussian reference $\beta \in\mathcal{D}_{\epsilon}(\hat{\mathbb{P}}_N,\theta) $. To take a step further, the equivalence between the $\mathcal{H}_2$ design and the Sinkhorn DRSE with $\theta/\epsilon<\infty$ can be distinguished into two cases. 
\begin{itemize}
    \item \textbf{Case 1:} $\theta$ is finite and $\epsilon \rightarrow{\infty}$. Any candidate distribution in $\mathcal{D}_{\epsilon}(\hat{\mathbb{P}}_N,\theta)$ satisfies ${\rm KL}\left( \pi| \alpha \odot\beta \right)\leq \lim_{\epsilon \rightarrow\infty} (\mathbb{E}_{(\xi,\xi')\sim\pi} \left[ c(\xi,\xi')  \right]+\theta)/\epsilon =0$, so any $\mathbb{P}$ deviating from $\beta$ is excluded from $\mathcal{D}_{\epsilon}(\hat{\mathbb{P}}_N,\theta)$. When \eqref{eq_theo_eq2H_feas} holds, $\mathcal{D}_{\epsilon}(\hat{\mathbb{P}}_N,\theta) =\{\beta\}$ is a singleton and thus the Sinkhorn DRSE design problem  \eqref{eq_primal} reduces to the $\mathcal{H}_2$ design \eqref{eq_H2}; otherwise, $\mathcal{D}_{\epsilon}(\hat{\mathbb{P}}_N,\theta)=\emptyset$ and hence infeasibility is encountered. 
    \item \textbf{Case 2:} $\theta  \rightarrow{\infty}$, $\epsilon \rightarrow{\infty}$ but $\theta/\epsilon \rightarrow \theta_{\rm KL} \geq0$ is finite. Similar to Case 1, the entropic regularizer $\epsilon{\rm KL}(\pi|\alpha\odot \beta)$ outweighs the transport cost $\mathbb{E}_{(\xi,\xi')\sim\pi} \left[ c(\xi,\xi')  \right]$, and thus the Sinkhorn ambiguity set becomes $\mathcal{D}_{\epsilon}(\hat{\mathbb{P}}_N,\theta)=\left\{ {\rm KL}(\mathbb{P}| {\beta}) \leq \theta/\epsilon=\theta_{\rm KL}\right\}$, which is a KL ball centered at the Gaussian distribution $\beta$. The Sinkhorn DRSE design problem \eqref{eq_primal} thus reduces to a KL DRSE design problem, where the $\mathcal{H}_2$ design is known to be optimal \cite{levy2012robust}. 
    \end{itemize}
    
In a nutshell,  a smaller $\epsilon$ leads to a more robust design against distributional uncertainty, albeit at the cost of being more conservative. Conversely, using a larger $\epsilon$ imposes a heavier entropic penalty on the transport plan, and consequently, the resulting Sinkhorn estimator bears a closer resemblance to the $\mathcal{H}_2$ estimation. Thus, the proposed approach strikes a flexible balance between the conventional $\mathcal{H}_2$ design under the Gaussian assumption and the data-centric Wasserstein DRSE with guaranteed distributional robustness.

\subsection{First-Order Solution Algorithm}
Since the objective function $J(P,\lambda,q)$ of the problem \eqref{eq_reform} is convex but nonlinear in $(P,\lambda)$, the dual problem \eqref{eq_main_dual} cannot be solved by off-the-shelf solvers. To address this issue, we propose a first-order solution algorithm in this subsection. We first restrict our attention to the objective ${J}\left(P,\lambda,q \right)$ in \eqref{eq_main_dual_O}. 
Specifically, for any scaling factor $t \geq 0$, we have:
\begin{equation}
    \begin{aligned}
&   ~~~ {J}\left(tP,t\lambda,tq \right)\\
&= f(t\lambda)+g\left(tq\right)+h(tP,t\lambda)\\
    &=t f(\lambda)+tg\left(q\right)+ (t\lambda)  \bar h \left(\frac{tP}{t\lambda}\right)\\
    &= t\left[f(\lambda)+g\left(q\right)+\lambda  \bar h \left(\frac{P}{\lambda}\right)\right]\\
    &=t\cdot {J}\left(P,\lambda,q \right),\\
\end{aligned} \label{eq_homogeneity}
\end{equation}
where the second equality follows from the fact that $f(\lambda)$ and $g(q)$ are linear, and $h(P,\lambda)= \lambda \bar h(P/\lambda)$ is a perspective of the function $\bar h(P)$ \cite{cescon2025data,boyd2004convex}, defined as:
\begin{equation*}
    \bar h(P) =    \frac{\epsilon n_\xi}{2}\log \left(
    \frac{\epsilon}{2} \right)-\frac{\epsilon}{2} \log  \det(\Omega-P).
\end{equation*}
Eq. \eqref{eq_homogeneity} characterizes the \textit{$1$-degree homogeneity} of $ {J}\left(P,\lambda,q \right)$, which enables efficient minimization of objective of \eqref{eq_reform} via iterative linear approximations and makes the Frank-Wolfe algorithm a suitable option \cite{frank1956algorithm,jaggi2013revisiting}. However, the Lagrange multiplier $\lambda$ and the epigraphical variables $(P,q)$ in \eqref{eq_main_dual} are essentially unbounded, but the Frank-Wolfe procedure theoretically entails a compact feasible region to prevent the linearized subproblem from being unbounded. To overcome this obstacle, we identify explicit upper bounds for the optimal solution $\lambda^{\star}$ and $(P^\star,q^\star)$.

\begin{theorem} \label{lemma_upper}
Assume the dual problem \eqref{eq_main_dual} is strictly feasible and admits a known feasible solution $(\Phi^{(0)}, P^{(0)}, \lambda^{(0)}, Q^{(0)}, q^{(0)})$. The optimal Lagrange multiplier $\lambda^{\star}$ is upper bounded by:
\begin{equation}
        \lambda^{\star} \leq \frac{\hat J(P^{(0)}, \lambda^{(0)})}{\theta - \theta_0}, 
        \label{eq_lemma_upper_lambda_star}
\end{equation}
where the function ${\hat J(P, \lambda)} $ is defined in \eqref{eq_J_hat} and the constant $\theta_0$ is defined in \eqref{eq_theo_dual_fea}.

\begin{proof}
The proof relies on constructing a linear lower bound for the objective $\hat J(\lambda, P)$ with respect to $\lambda$. To this end, we begin by bounding the nonlinear terms $\bar g(P,\lambda)$ and $h( P,\lambda)$, separately. Observing that $\lambda \Omega - P \preceq \lambda \Omega$ and invoking the monotonicity of the determinant over $\mathbb{S}^{n_\xi}_+$ yield a lower bound of $h(\lambda,P)$: 
\begin{equation}
    \begin{aligned}
h(\lambda,P)
  &  \geq   \frac{\lambda\epsilon}{2}
  \left[n_\xi \log\left(\frac{\lambda\epsilon}{2}\right)-\log \det\left(\lambda \Omega\right)  \right]\\
&=     \frac{\lambda\epsilon}{2}\left[n_\xi \log\left(\frac{\epsilon}{2}\right)-\log \det\left( \Omega\right)\right] \\&\triangleq\underline{h}(\lambda).
\end{aligned} \label{eq_lemma_upper_h_lower}
\end{equation}
The analysis in Theorem \ref{theo_eq2W} implies that $g\left(q^{\star}\right)=\hat g(P^{\star},\lambda^{\star})$ and ${ J(P^{\star}, \lambda^{\star},q^{\star})}={\hat J(P^{\star}, \lambda^{\star})} $ hold at the optimum. By similar arguments, we derive a lower bound for $g\left(q^{\star}\right)$: 
\begin{equation}
    \begin{aligned}
g\left(q^{\star}\right)&=\hat g(\lambda^{\star},P^{\star})\\
       & \geq  \frac{1}{N} \sum_{i=1}^N {\left[\left(\lambda^{\star}\right)^2\hat{\xi}_i^\top \left(\lambda^{\star} \Omega\right)^{-1}\hat{\xi}_i -\lambda^{\star}  \hat{\xi}^\top_i \hat{\xi}_i\right]}\\
  &   =  \frac{\lambda^{\star}}{N} \sum_{i=1}^N {\hat{\xi}_i^\top \left(\Omega^{-1}-I\right)\hat{\xi}_i }  \triangleq \underline{{ g}}(\lambda^{\star}).
\end{aligned} \label{eq_lemma_upper_g_lower}
\end{equation}
Building upon \eqref{eq_lemma_upper_h_lower} and \eqref{eq_lemma_upper_g_lower}, the optimal value ${ J(P^{\star}, \lambda^{\star},q^{\star})}$ satisfies 
\begin{align*}
 &~~~{ J(P^{\star}, \lambda^{\star},q^{\star})}\\
 &= \hat J(P^{\star},\lambda^{\star}) \\
 & \geq f(\lambda^{\star})+\underline{ h}(\lambda^{\star})+  \underline{\hat{ g}}(\lambda^{\star})\\
   & =  \lambda^{\star} \bigg(\theta-\frac{\epsilon}{2}\log \det(\Sigma) + \frac{\epsilon n_\xi}{2}\log\left(\frac{\epsilon}{2}\right)\\
  & ~~~~~~~ -\frac{\epsilon}{2}\log \det(\Omega)+\frac{1}{N}\sum_{i=1}^N \hat{\xi}_i^\top \left(\Omega^{-1}-I\right)\hat{\xi}_i \bigg)\\
 & = \lambda^{\star} \left(\theta-\theta_0 \right),
\end{align*}
where $\theta_0$ was already defined in \eqref{eq_theo_dual_fea}. Due to the optimality of $\left(\lambda^{\star}, P^{\star}\right)$, it holds for any feasible $\left\{\lambda^{(0)}, P^{(0)}\right\}$ that
\begin{align}
    \lambda^{\star}\left(\theta-\theta_0 \right) \leq     \hat J(\lambda^{\star},P^{\star})  \leq     \hat J(\lambda^{(0)},P^{(0)}).  
\end{align}
Given the strict feasibility condition $\theta > \theta_0$ established in Theorem \ref{theo_dual}, one can obtain \eqref{eq_lemma_upper_lambda_star} and this completes the proof. 
\end{proof}
\end{theorem}

\begin{theorem}
    \label{lemma_upper_else}
Assume the problem \eqref{eq_reform} is strictly feasible and the upper bound $\lambda^{\star}\leq   \bar \lambda$ is given, its optimal solution $(P^\star,\lambda^\star,q^\star) $ satisfies the bound $ \underline{ P} \preceq P^{\star}\preceq \bar P$, $ \underline{ \lambda} \leq \lambda^{\star}  \leq \bar \lambda$ and $\underline{q}\leq q^{\star} \leq \bar q$, where
\begin{align}
&\underline{P}=Q^\top {(R^\dagger)}^\top R^\dagger Q ,~\bar P=\bar  \lambda \Omega +\kappa I, \label{lemma_upper_else_bound_1}\\
&\underline{\lambda}=\bar\sigma \left(  \Omega^{-1}  \underline{P} +\kappa \Omega^{-1} \right) ,\\
&\underline{q}=\begin{bmatrix}
    \underline{q_1}&\cdots&\underline{q_N}
\end{bmatrix},~\underline{q_i}=\underline{\lambda}^2 \hat{\xi}_i^\top (\bar \lambda \Omega - \underline{P})^{-1} \hat{\xi}_i -\bar {\lambda }\hat{\xi}_i^\top \hat{\xi}_i ,\\ 
&\bar{q}=\begin{bmatrix}
    \bar{q}_1&\cdots&\bar{q}_N
\end{bmatrix},~\bar q_i=\bar\lambda^2 \hat{\xi}_i^\top  \hat{\xi}_i/\kappa - \underline{ \lambda} \hat{\xi}_i^\top \hat{\xi}_i .\label{lemma_upper_else_bound_4}
\end{align}

\begin{proof}
We start by bounding the optimal solution $(P^\star,\lambda^\star)$. From \eqref{eq_reform_C2}, the solution $\Phi$ can be decomposed into two orthogonal parts:
\begin{equation}
\Phi = R^\dagger + \Phi^{\perp}(I - R R^\dagger), \label{eq_theo_conv_Phi_dagger}
\end{equation}
where $\Phi^{\perp}$ is an arbitrary matrix representing the degrees of freedom in the left null space $I - R R^\dagger$. By substituting \eqref{eq_theo_conv_Phi_dagger} into \eqref{eq_main_dual_C2} and invoking the Schur complement, we arrive at the following lower bound: 
\begin{align*}
 P & \succeq Q^\top \Phi^\top \Phi Q    \\
& \succeq Q^\top \left[{(R^\dagger)}^\top R^\dagger+{(I - R R^\dagger)}^\top {(\Phi^{\perp})}^\top  \Phi^{\perp}(I - R R^\dagger)  \right]  Q,
\end{align*}
where the second inequality follows from $ R^\dagger R R^\dagger= R^\dagger$ and the symmetry of $ RR^\dagger$. Noticing that ${(I - R R^\dagger)}^\top {(\Phi^{\perp})}^\top  \Phi^{\perp}(I - R R^\dagger) \succeq 0$, we derive the lower bound $  P\succeq Q^\top {(R^\dagger)}^\top R^\dagger Q    =\underline{P}$, which implies $P^{\star} \succeq \underline{P}$. Conversely, substituting $\lambda^{\star} \leq \bar \lambda$ into \eqref{eq_reform_C1} yields the upper bound $P^{\star} \preceq  \lambda ^{\star}\Omega +\kappa I  \preceq \bar\lambda \Omega+\kappa I = \bar P  \label{eq_theo_conv_P_upper}$. Furthermore, substituting $P\succeq \underline{P}$ into \eqref{eq_main_dual_C2} implies $\lambda\Omega \succeq \underline{P}+\kappa I$, which leads to the lower bound $   \lambda \geq \bar\sigma \left(  \Omega^{-1}  \underline{P} +\kappa \Omega^{-1}\right) =\underline{\lambda} $ because $\Omega = I +\frac{ \epsilon}{2}\Sigma^{-1}$ is invertible, and this implies $\lambda^{\star} \geq\underline{\lambda}$. With $(P^{\star},\lambda^{\star})$ confined to the established bounded interval, we can bound the optimal solution ${q^{\star}_i}$ as follows: 
\begin{equation*}
\begin{split}
   q_i^{\star} &= \left(\lambda^{\star}\right)^2 \hat{\xi}_i^\top (\lambda^{\star} \Omega - P^{\star})^{-1} \hat{\xi}_i - \lambda^{\star} \hat{\xi}_i^\top \hat{\xi}_i  \\
    &\geq \left \{  \begin{split}
    \sup &~  \left(\lambda^{\star}\right)^2 \hat{\xi}_i^\top (\lambda^{\star} \Omega - P^{\star})^{-1} \hat{\xi}_i - \lambda^{\star} \hat{\xi}_i^\top \hat{\xi}_i  \\
    {\rm s.t.} &~  \underline{ P} \preceq P^{\star}\preceq \bar P,~ \underline{ \lambda} \leq \lambda^{\star}  \leq \bar \lambda
    \end{split} \right .\\
    &\geq \underline{\lambda}^2 \hat{\xi}_i^\top (\bar{\lambda} \Omega - \underline P)^{-1} \hat{\xi}_i - \bar{ \lambda} \hat{\xi}_i^\top \hat{\xi}_i,
\end{split}
\end{equation*} 
and $q_i^\star \leq \bar\lambda^2 \hat{\xi}_i^\top  \hat{\xi}_i/\kappa - \underline{ \lambda} \hat{\xi}_i^\top \hat{\xi}_i=\bar{q_i}$ is due to $\lambda \Omega-P \succeq \kappa I$ by using a similar argument. This completes the proof.
\end{proof}
\end{theorem}

Based on Theorems \ref{lemma_upper} and \ref{lemma_upper_else}, one can impose the constraints into the feasible set of \eqref{eq_reform} and define a compact subset $\mathcal{S}$ of the feasible region thereof as follows: 
\begin{equation}
    \mathcal{S}= \left\{ \left(\Phi,P,\lambda,q\right) \left|\begin{aligned}
        &\underline{P} \preceq P \preceq \bar{P},~\underline{\lambda} \leq\lambda \leq \bar{\lambda},\\
        &\underline{q} \leq  q\leq \bar{q},~\eqref{eq_reform_C1}-\eqref{eq_reform_C4}.
    \end{aligned}
    \right. \right\}. \label{eq_feasible_S}
\end{equation}
Since the added constraints are valid for the optimal solution, $(\Phi^{\star},P^{\star},\lambda^{\star},q^{\star}) \in \mathcal{S}$ still holds and preserves the optimality of \eqref{eq_main_dual}. Now we are ready to apply the Frank-Wolfe algorithm to iteratively solve \eqref{eq_main_dual}. Given the the current point $(P^{(k)},\lambda^{(k)},q ^{(k)})$ at the $k$th iteration, we define $J^{(k)} \triangleq J(P^{(k)},\lambda^{(k)},q ^{(k)})$ and solve the following direction-finding subproblem: 
\begin{equation}
\begin{aligned}
  \displaystyle    \min_{\Phi, \tilde{P},\tilde{\lambda}, \tilde q} &~\left\langle \tilde P,  \nabla_P J^{(k)}\right \rangle+\tilde{\lambda }\nabla_{\lambda} {J} ^{(k)}+\tilde{q}^\top \nabla_{q}  {J} ^{(k)}\\
   {\rm s.t.}~&~{\left(\Phi, \tilde{P},\tilde{\lambda}, \tilde q\right) \in \mathcal{S}}
\end{aligned} \label{eq_find_direction}
\end{equation}
where $\nabla_{P}  {J}^{(k)}= {\lambda^{(k)}\epsilon}( \lambda\Omega - P^{(k)})^{-1}/{2} $, $\nabla_{q}  {J}^{(k)}= {1}_N$ and 
 \begin{align*}
 \nabla_{\lambda}  {J}^{(k)}&=
 \theta - \frac{\epsilon}{2}\log\det(\Sigma) + \frac{\epsilon n_\xi}{2}\left[\log\left(\frac{\lambda^{(k)}\epsilon}{2}\right) + 1\right]\\
   & ~~~- \frac{\epsilon}{2}\log\det\left(\lambda^{(k)}\Omega-P^{(k)}\right) \\
   & ~~~- \frac{\lambda^{(k)}\epsilon}{2}{\rm Tr}\left\{\left(\lambda^{(k)}-P^{(k)}\Omega^{-1}\right)^{-1}\right\}, 
\end{align*}
 Indeed, the linear minimization oracle \eqref{eq_find_direction} is a semi-definite programming (SDP). It can be effectively solved by the interior-point method \cite{toh2018some}, and its optimal solution $(\tilde{P}^{\star},\tilde{\lambda}^{\star},\tilde{q}^{\star} )$ indicates the descent direction. Using step size $\rho^{(k)}=2/(k+2)$, the updating rule of $\left\{{P},{\lambda}\right\}$ can be expressed as: 
\begin{align*}
  &   P^{(k+1)}= (1-\rho^{(k)}) P^{(k)}+\rho^{(k)}\tilde{P}^{\star},\\
  &  \lambda^{(k+1)}= (1-\rho^{(k)})\lambda^{(k)}+\rho^{(k)}\tilde{\lambda}^{\star},\\
  & q^{(k+1)} = (1-\rho^{(k)})q^{(k)}+\rho^{(k)}\tilde{q}^{\star}.
\end{align*}
Since $q^{(k)}$ does not appear in the direction-finding subproblem \eqref{eq_find_direction}, there is no need to update it. 
Due to the convexity of $\mathcal{S}$, this updating rule preserves the strict feasibility of $(P^{(k+1)}, \lambda^{(k+1)})$ via convex combination of $(P^{(k)},\lambda^{(k)})\in {\rm proj}_{P,\lambda}(\mathcal{S})$ and $(\tilde{P}^{\star},\tilde{\lambda}^{\star})\in {\rm proj}_{P,\lambda}(\mathcal{S})$, thereby obviating the need for cumbersome projections \cite{jaggi2013revisiting}. In the $k$th iteration, we denote the optimal value of \eqref{eq_find_direction} as $\tilde J^{(k)}\triangleq \langle \tilde P^{\star},  \nabla_P J^{(k)} \rangle+\tilde{\lambda }^{\star}\nabla_{\lambda} {J} ^{(k)}+(\tilde{q}^{\star})^\top \nabla_{q}  {J} ^{(k)}$ and the duality gap arising from the linearization approximation is given by \cite{jaggi2013revisiting}: 
\begin{equation}
    \begin{aligned}
   &~~~~  {J}^{(k)}-{J}\left(P^{\star},\lambda^{\star},q^{\star}\right)\\
      &\leq  \left\langle P^{(k)}- P^{\star},  \nabla_P J^{(k)}\right \rangle\\
     &~~~~~~~~~~~ +(\lambda^{(k)}-\lambda^{\star})\nabla_{\lambda} {J} ^{(k)}+\left(q^{(k)}-{q}^{\star}\right)^\top \nabla_{q}  {J} ^{(k)}\\
           &\leq  \max_{\left(\Phi, \tilde{P},\tilde{\lambda}, \tilde q\right) \in \mathcal{S}} \left\langle P^{(k)}- \tilde{P},  \nabla_P J^{(k)}\right \rangle\\
     &~~~~~~~~~~~ +(\lambda^{(k)}-\tilde{\lambda})\nabla_{\lambda} {J} ^{(k)}+\left(q^{(k)}-\tilde{q}\right)^\top \nabla_{q}  {J} ^{(k)}\\
  & =\left\langle  P^{k},  \nabla_P J^{(k)}\right \rangle+{\lambda ^{(k)}}\nabla_{\lambda} {J} ^{(k)}+\left({q ^{(k)}}\right)^\top \nabla_{q}  {J} ^{(k)}-\tilde{J}^{(k)},
\end{aligned} \label{eq_FW_gap}
\end{equation}
where the first inequality is due to the convexity of $ {J}\left(P,\lambda,q \right)$. Recalling the $1$-degree homogeneity of $ {J}\left(P,\lambda,q \right)$ in \eqref{eq_homogeneity}, it follows from Euler's homogeneous function theorem that $ {J}\left(P,\lambda,q \right)$ has unit total elasticity \cite[Definition 3.1]{shafieezadeh2018wasserstein}, satisfying
\begin{equation}
\left\langle  P^{(k)}, \nabla_P J^{(k)}\right \rangle+{\lambda ^{(k)}}\nabla_{\lambda} {J} ^{(k)}+\left({q ^{(k)}}\right)^\top \nabla_{q}  {J} ^{(k)}= J^{(k)}. \label{eq_UTE}
\end{equation}
Substituting \eqref{eq_UTE} into \eqref{eq_FW_gap} yields $\tilde{J}^{(k)} \leq{J}(P^{\star},\lambda^{\star},q^{\star})$. That is, the optimal value $\tilde{J}^{(k)}$ of the subproblem \eqref{eq_find_direction} always produces a lower bound for the optimal value $(P^{\star},\lambda^{\star},q^{\star})$ of \eqref{eq_main_dual}, while the current feasible objective value ${J}^{(k)}=\hat{J}\left(P^{(k)},\lambda^{(k)}\right)$ serves as an upper bound. Consequently, the resulting gap between the upper and lower bounds yields a criterion for evaluating the convergence of the Frank-Wolfe algorithm. The implementation details of the tailored solution algorithm are summarized in Algorithm \ref{alg1}. Its convergence property is established as follows.

\begin{algorithm}
\caption{Frank-Wolfe Procedure for Solving Sinkhorn DRSE Problem \eqref{eq_main_dual}.} \label{alg1}
\begin{algorithmic}[1] 
\REQUIRE  Initial feasible solution $(\Phi^{(0)},P^{(0)},\lambda^{(0)},q  ^{(0)})$, reference covariance $\Sigma$, data samples $\{\hat\xi_i\}_{i=1}^N$, regularization parameter $\epsilon$, ball radius $\theta$ and optimality tolerance $\bar\gamma$. 
\STATE Initialize $\bar {P}$, $\underline {P}$, $\bar {\lambda}$, $\underline {\lambda}$, $\bar q$ and $\underline{q}$ based on \eqref{eq_lemma_upper_lambda_star} and \eqref{lemma_upper_else_bound_1}-\eqref{lemma_upper_else_bound_4}; set $\gamma^{(0)}=\infty$ and $k\leftarrow 0$.
\WHILE{meeting the convergence criterion $ \gamma^{(k)}\leq\bar\gamma$}
\STATE Iterate $k \leftarrow k+1$.
\STATE Compute the gradient $\nabla_{P} J^{(k)}$, $\nabla_{\lambda} J^{(k)}$ and the step size $\rho^{(k)} = 2/(k+2)$.
\STATE Solve \eqref{eq_find_direction} and obtain the optimal value $\tilde{J}^{(k)}$ and the optimal solution $(\tilde{P}^{\star},\tilde{\lambda}^{\star})$. 
\STATE Update $P^{(k+1)}\leftarrow (1-\rho^{(k)})P^{(k)}+\rho^{(k)}\tilde{P}^{\star}$ and $\lambda^{(k+1)}\leftarrow (1-\rho^{(k)})\lambda^{(k)}+\rho^{(k)}\tilde{\lambda}^{\star}$.
\STATE Compute the optimality gap $\gamma^{(k)} \leftarrow \lvert \hat{J}\left(P^{(k)},\lambda^{(k)}\right)-\tilde{J}^{(k)}\rvert$. 
\ENDWHILE 
\STATE Solve $$\left \{ \begin{split}
\min_{\Phi} & \left\lVert \Phi \right\rVert_{F}^2\\
{\rm s.t.} &  \begin{bmatrix}
        P^{(k)} & Q^\top \Phi^\top \\
       \Phi Q & I
    \end{bmatrix}\succeq 0,~\eqref{eq_reform_C2},~\eqref{eq_reform_C3}
\end{split} \right .$$ to obtain $\Phi^{\star}$. 
\STATE Partition $\Phi^{\star}$ into $\Phi^{\star}_x \in \mathbb{R}^{{(T+1)}n_x\times (T+1)n_x}$ and $\Phi^{\star}_y\in \mathbb{R}^{{(T+1)}n_x\times {(T+1)}n_y}$, and compute the observer gain $L^{\star} = {(\Phi^{\star}_{x})}^{-1} \Phi^{\star}_{y}$. 
\STATE \textbf{Return} $L^{\star}$ .
\end{algorithmic}
\end{algorithm}

\begin{theorem} \label{theo_conv}
For any integer $k >0$, then $J^{(k)}$ computed by the $k$th iteration of Algorithm \ref{alg1} admits the optimality gap as follows:
\begin{equation}
J\left(P^{\star},\lambda^{\star},q^{\star}\right) - J^{(k)}
 \leq\frac{2  \bar{\mathcal{C}}_{{ J}}(1+\gamma_0)}{k+2} \label{eq_theo_conv_concl}
\end{equation}
where $\gamma_0$ is the solution accuracy of the direction-finding subproblem \eqref{eq_find_direction} and
\begin{equation*}
    \begin{aligned}
    &\bar{\mathcal{C}}_{{ J}}={\left(\left\lVert \bar P-\underline{P}\right\rVert^2_{F}+\left|\bar  \lambda-\underline{\lambda}\right|^2+\left\lVert \bar q-\underline{q}\right\rVert^2\right)} \\
&\qquad \qquad \qquad \qquad \cdot   \left(\frac{\bar \lambda \epsilon+2\epsilon \left\lVert \bar P \right\rVert_{F}+2\lVert \Omega \rVert_2^2}{2\kappa^2}+ \frac{n_\xi\epsilon}{2\underline{\lambda}}   \right)
    \end{aligned} \label{eq_curvature_upper}
\end{equation*}

\begin{proof}
We employ the proof machinery of the convergence of the Frank-Wolfe algorithm, which hinges on finding an upper bound for the curvature constant $\mathcal{C}_{{ J}}$  \cite[Lemma 7]{jaggi2013revisiting}: 
\begin{equation}
   \mathcal{C}_{{J}} \leq \left[{\rm diam_{\lVert\cdot\rVert}\left( {\rm proj}_{P,\lambda,q}(\mathcal{S})\right)}\right]^2 {\rm Lip}\left(\nabla h\right). \label{eq_cJ_up}
\end{equation}
First, we aim at proving the upper bound of the feasible diameter ${\rm diam_{\lVert\cdot\rVert}({\rm proj}_{P,\lambda,q}({\mathcal{S}}))}$ over $
(P,\lambda,q)$ with respect to an arbitrary norm $\lVert \cdot \rVert$ and the Lipschitz constant  ${\rm Lip}(\nabla  {J})$ of the gradient. 
Building upon the established bounds for $(P,\lambda,q)$ in \eqref{eq_feasible_S}, we derive the diameter of ${\rm proj}_{P,\lambda,q}(\mathcal{S})$ with respect to the Frobenius norm:
\begin{equation}
\begin{aligned}
    &~~~~{\rm diam}_{\lVert \cdot \rVert_{F}}\left({\rm proj}_{P,\lambda,q}( \mathcal{S} )\right) \\
   & = \left \{ \begin{split} \sup & ~\sqrt{\left\lVert P-P'\right\rVert^2_{ F}+\lvert\lambda -\lambda'\rvert^2+\lVert q -q'\rVert^2}\\
   {\rm s.t. } & ~(P,\lambda,q)\in \mathcal{S},~ (P',\lambda',q')\in \mathcal{S}
   \end{split} \right .\\
&\leq  \sqrt{ \left\lVert \bar P-\underline{P}\right\rVert^2_{F}+\left|\bar  \lambda-\underline{\lambda}\right|^2+\left\lVert \bar q-\underline{q}\right\rVert^2},
\end{aligned}\label{eq_diam_upper}
\end{equation}
where the second inequality follows from the monotonicity of $\lVert \cdot\rVert_{ F}$ over $\mathbb{S}^{n_\xi}_+$. Then, for any perturbation  $\Delta=(\Delta_{P},\Delta_{\lambda},\Delta_{q})\in \mathbb{S} ^{n_\xi}\times \mathbb{R}\times\mathbb{R}^N$ on $ (P,\lambda,q)$, $\tilde{J}(P+\Delta_{P},\lambda+\Delta_{\lambda})=f(\lambda+\Delta_{\lambda})+g(q+\Delta_q)+h(P+\Delta_{P},\lambda+\Delta_{\lambda})$ holds, where the linear terms $f(\lambda+\Delta_{\lambda})$ and $g(q+\Delta_q)$ can be expressed as:
\begin{align*}
&f(\lambda+\Delta_{\lambda})
=f(\lambda)+\Delta_{\lambda}\theta-\frac{\Delta_{\lambda}\epsilon}{2}\log \det(\Sigma),\\
&g(q+\Delta_q)=g(q)+1_N^{\top}\Delta_q,
\end{align*}
and the nonlinear term $h(P+\Delta_{P},\lambda+\Delta_{\lambda})$ can be expanded as:
\begin{align*}
  &~~~~~  h(P+\Delta_{P},\lambda+\Delta_{\lambda})\\
    &= \frac{(\lambda+\Delta_{\lambda})\epsilon n_\xi}{2}\log\left(\frac{(\lambda+\Delta_{\lambda})\epsilon}{2}\right) \\
    &~~~~~ -\frac{(\lambda+\Delta_{\lambda})\epsilon}{2}\log \det\left((\lambda+\Delta_{\lambda}) \Omega-(P+\Delta_{P})\right)\\
   & = h(P,\lambda)+\frac{\epsilon n_\xi}{2}\left[\log\left(\frac{\lambda\epsilon}{2}\right) +1\right]\Delta_\lambda+\frac{\epsilon n_\xi}{4\lambda}\Delta^2_\lambda+\mathcal{O}(\Delta^3_\lambda)\\
    &~~~~~ +  \left[ -\frac{\epsilon}{2}\log\det\left( \lambda\Omega - P\right) - \frac{\lambda\epsilon}{2}{\rm Tr}\left\{\left( \lambda\Omega - P\right)^{-1}\Omega\right\} \right] \Delta_\lambda\\
       &~~~~~  + \frac{\lambda\epsilon}{2}{\rm Tr}\left\{  \left( \lambda\Omega - P\right)^{-1} \Delta_P \right\}\\
       &~~~~~ - \frac{\epsilon}{2} \Delta_\lambda {\rm Tr}\left\{ \left( \lambda\Omega - P\right)^{-1}(\Delta_\lambda \Omega - \Delta_P) \right\} \\
       &~~~~~ + \frac{\lambda\epsilon}{4}{\rm Tr}\left\{ \left(\left(\lambda\Omega - P\right)^{-1}(\Delta_\lambda \Omega - \Delta_P) \right)^2 \right\}   + \mathcal{O}(\|\Delta\|^3).
\end{align*}
Combining these yields the overall expansion of the objective function:
\begin{align*}
    &~~~~~{J}(P+\Delta_{P},\lambda+\Delta_{\lambda},q+\Delta_q)\\
    &=
 h(P, \lambda,q) + \left\langle G, \Delta_P  \right\rangle + g^\top \begin{bmatrix}
    \Delta_\lambda^\top & \Delta_q^\top
 \end{bmatrix}^\top\\
 &\qquad\qquad   +     \frac{1}{2} \begin{bmatrix} \Delta_\lambda \\  \rm vec(\Delta_P)\\\Delta_q \end{bmatrix}^\top H \begin{bmatrix} \Delta_\lambda \\ {\rm vec}\left\{\Delta_P\right\} \\
 \Delta_q  \end{bmatrix} + \mathcal{O}(\|\Delta\|^3),
\end{align*}
where the gradient components $G = \nabla_P J\in \mathbb{S}_+^{n_\xi}$, $g= \begin{bmatrix}
         \nabla_{\lambda} J^\top&\nabla_{q} J^\top
    \end{bmatrix}^\top \in \mathbb{R}^{N+1}$, and the Hessian matrix $H \in \mathbb{S}^{n_\xi^2+1+N}$ is given by:
\begin{equation*}
    H = \begin{bmatrix}
      H_{PP}& H_{P\lambda } ^\top &0\\
        H_{P\lambda} &   H_{\lambda\lambda}&0 \\
        0 &0 &0
    \end{bmatrix} , 
\end{equation*}
with the blocks defined as follows: 
    \begin{align}
    &    H_{PP} = \frac{\lambda\epsilon}{2} \left[ \left(\lambda\Omega - P\right)^{-1} \otimes \left(\lambda\Omega - P\right)^{-1} \right], \label{eq_H_block1}\\
   &  H_{P\lambda}=-\frac{\epsilon}{2} {\rm vec}\left\{ (\lambda\Omega-P)^{-1} P \left(\lambda\Omega - P\right)^{-1} \right\}, \label{eq_H_block2}\\
  \begin{split}
      &      H_{\lambda\lambda} = \frac{n_\xi\epsilon}{2\lambda} - \epsilon{\rm Tr}\left\{\left(\lambda\Omega - P\right)^{-1}\Omega\right\}\\
 &\qquad\quad  +  \frac{\lambda\epsilon}{2}{\rm Tr}\left\{\left(\lambda\Omega - P\right)^{-1}\Omega \left(\lambda\Omega - P\right)^{-1}\Omega\right\}.
    \end{split}\label{eq_H_block3}
\end{align}
The Lipschitz constant of $\nabla {J}$ can be upper-bounded by the spectral norm of $H$. We proceed with bounding the spectral norms of the Hessian blocks \eqref{eq_H_block1}-\eqref{eq_H_block3}:
\begin{align*}
    \lVert      H_{P P} \rVert_2  &\leq \frac{\lambda\epsilon}{2}   \left\lVert \left(\lambda \Omega-P \right)^{-1}\right\rVert^2_2 \leq   \frac{\bar \lambda \epsilon}{2\kappa^2}, \\
   \lVert      H_{P \lambda} \rVert_2 &=    \frac{\epsilon}{2} \left\lVert \left(\lambda \Omega-P \right)^{-1} P \left(\lambda \Omega-P \right)^{-1}\right\rVert_{F} \\
   &\leq  \frac{\epsilon}{2} \left\lVert \left(\lambda \Omega-P \right)^{-1}\right\rVert^2_2 \lVert P \rVert_{F} \\
  & \leq   \frac{\epsilon \lVert \bar P \rVert_{ F}}{2\kappa^2},   \\
       \lVert   H_{\lambda\lambda}\rVert_2&\leq \frac{n_\xi\epsilon}{2\lambda}  +  \frac{\lambda\epsilon}{2}{\rm Tr}\left\{\left(\lambda \Omega-P \right)^{-1}\Omega \left(\lambda \Omega-P \right)^{-1}\Omega\right\} \\
&\leq \frac{n_\xi\epsilon}{2\lambda}  + \left \lVert \left(\lambda \Omega-P \right)^{-1}\right\rVert^2_2 \left\lVert \Omega \right\rVert_2^2   \\
&\leq \frac{n_\xi\epsilon}{2\underline{\lambda}} +\frac{\lVert \Omega \rVert_2^2}{\kappa^2}  ,
\end{align*}
which are derived from the triangle inequality building upon the established bounds $P\preceq \bar P$, $\lambda \leq \bar\lambda$ and $(\lambda \Omega-P )^{-1}\preceq  I/\kappa  $. Invoking the triangle inequality of spectral norms, the Lipschitz bound satisfies: 
\begin{equation}
    \begin{aligned}
     {\rm Lip}\left(\nabla J\right) & \leq \lVert H \rVert_2 \\
     &\leq \lVert    H_{PP}\rVert_2+2\lVert    H_{P\lambda}\rVert_2+\lVert    H_{\lambda\lambda}\rVert_2\\
     & \leq \frac{\bar \lambda \epsilon+2\epsilon \lVert \bar P \rVert_{F}+2\lVert \Omega \rVert_2^2}{2\kappa^2}+ \frac{n_\xi\epsilon}{2\underline{\lambda}} . 
\end{aligned}\label{eq_lip_upper}
\end{equation}
 Substituting \eqref{eq_diam_upper} and \eqref{eq_lip_upper} into \eqref{eq_cJ_up}, the curvature bound  $\mathcal{C}_{{J}} \leq \bar{ \mathcal{C}}_{{J}}$ holds. Consequently, by invoking \cite[Theorem 1]{jaggi2013revisiting}, the optimality gap of the $k$th iteration can be bounded as $J^{(k)}- J(P^{\star},\lambda^{\star},q^{\star})\leq{2\mathcal{C}_{\tilde{J}}}(1+\gamma_0)/{(k+2)}\leq{2 \bar{\mathcal{C}}_{\tilde{J}}}(1+\gamma_0)/({k+2})$, which completes the proof. 
\end{proof}
\end{theorem}

\textit{Initialization Strategy: } To effectively execute Algorithm \ref{alg1}, a feasible initial solution $(P^{(0)},\lambda^{(0)})$ to \eqref{eq_main_dual} is required. We note that when $\epsilon=0$, the nonlinear terms in \eqref{eq_reform_O} vanish due to $\lim_{\epsilon\rightarrow0} \epsilon\log\epsilon=0$, and thus \eqref{eq_reform} reduces to the Wasserstein DRSE problem \cite{hajar2025distributionally} in the form of SDP resolvable using off-the-shelf convex programming solvers. Henceforth, the optimal solution to the Wasserstein DRSE problem itself yields a feasible $ (P^{(0)},\lambda^{(0)})$.
An alternative option is to design $L$ through pole placement on $A_t - L_{t| t}C_t$. Then, a feasible $\Phi^{(0)}=[\Phi^{(0)}_{x}~~\Phi^{(0)}_{y}]$ can be computed based on \eqref{eq_Phi1} and \eqref{eq_Phi2} to respect the block lower-triangular constraint \eqref{MHE_lower_triangle}. On this basis, a feasible $P$ is given by $P^{(0)} = 
Q^{\top}(\Phi^{(0)})^{\top} \Phi^{(0)} Q$, and a feasible $\lambda$ can be set as $\lambda^{(0)} = \bar\sigma (  \Omega^{-1}  P^{(0)}+ \kappa \Omega^{-1})$ satisfying \eqref{eq_reform_C1}.

\section{Simulation Case Study} \label{case}
In this section, a simulation case study is presented to verify the performance of the proposed DRSE algorithm. We consider the linear time-varying system \eqref{eq_statepace} with coefficient matrices \cite{shafieezadeh2018wasserstein,xu2024globalized}:
\begin{align*}
   & A_t = \begin{bmatrix}
        0.9802 & 0.0196+0.099\Delta_t\\
        0&0.9802
    \end{bmatrix},~  C_t =\begin{bmatrix}
        1 &-1
    \end{bmatrix},\\
   & B_tB_t^\top = \begin{bmatrix}
      1.9608 & 0.0195\\
      0.0195& 1.9605
    \end{bmatrix},~
  D_t D_t^\top= 1.
\end{align*}
We assume $BD^\top=0$ and $\Delta_t=t-t_0$ is an \textit{a priori} known time-varying component. In the simulation, we set the estimation horizon $T= 10$ and the initial state estimation $\hat{x}(0)=0_{n_\xi}$. The uncertainty $\xi$ is governed by the mixture of a Laplace distribution with mean $0_{n_\xi}$ and covariance $0.2I_{n_\xi}$, and a uniform distribution supported on $[-0.31\cdot1_{{n_\xi}},0.31\cdot 1_{{n_\xi}} ]$. The SDPs are solved using Mosek \cite{mosek} via the YALMIP modeling interface \cite{lofberg2004yalmip} on a desktop computer with an Intel Core i7-10700 CPU and $64$G RAM.

First, $N=100$ i.i.d. samples $\{\hat\xi_i\}_{i=1}^{N}$ are generated and used to construct ambiguity sets $\mathcal{D}_{\epsilon}(\hat{\mathbb{P}}_N,\theta)$ and $\mathcal{D}(\hat{\mathbb{P}}_N,\theta)$. To investigate the impact of hyperparameters, we solve the Sinkhorn DRSE problem \eqref{eq_reform} with varying values of $(\theta,\epsilon)$. Fig. \ref{fig_obj} shows the optimal objectives of the $\mathcal{H}_2$ estimation, the Wasserstein DRSE and the Sinkhorn DRSE, where Fig. \ref{fig_MSE_small} corresponds to a grid of small $\theta$, while \ref{fig_MSE_big} displays a grid of large $\theta$. Overall, the observed improvement in the worst-case objective of the Sinkhorn DRSE is attributed to the reduction in the size of the ambiguity set $\mathcal{D}_{\epsilon}(\hat{\mathbb{P}}_N,\theta)$ as the regularization parameter $\epsilon$ increases and ball radius decreases. In line with the analysis in Section \ref{subsec_asy}, the worst-case objective approaches that of the Wasserstein DRSE with the same $\theta$ when $\epsilon\rightarrow0$. Conversely, increasing $\epsilon$ may render the problem infeasible if \eqref{eq_theo_dual_fea} is violated, as shown by the jagged edges in Fig. \ref{fig_obj_small}. Meanwhile, it can be observed from Fig. \ref{fig_obj_big} that the Sinkhorn DRSE design problem \eqref{eq_primal} preserves feasibility even for extremely large $\epsilon$ and its worst-case objective converges to that of $\mathcal{H}_2$ design \eqref{eq_H2}, which is in line with Theorem \ref{theo_eq2W}.

\begin{figure}[htbp]   
    \centering    
        \subfloat[{$\theta \in [10^{-2},10^0]$ and $\epsilon \in [10^{-4},10^{-1.8}]$.}]{
        \includegraphics[width=0.95\linewidth]{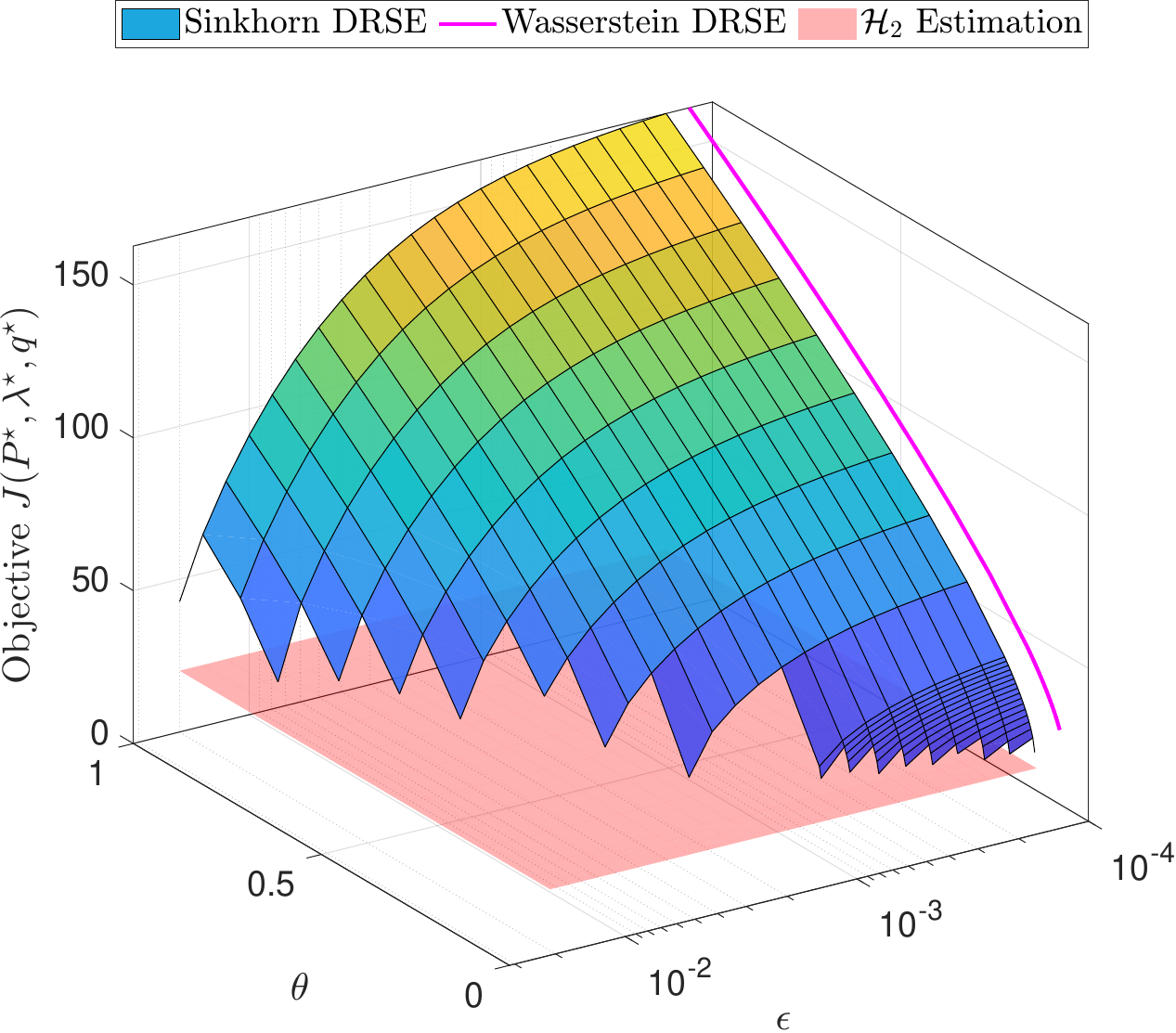} 
        \label{fig_obj_small}
    }\hspace{0.5cm}
              \subfloat[{$\theta \in [6.5,10]\geq {\rm Tr}\{\Sigma\}+\sum^N_{i=1}\hat{\xi}_i^\top \hat{\xi}_i=6.18$ and $\epsilon \in [10^{-4},10^3]$.}]{
        \includegraphics[width=0.95\linewidth]{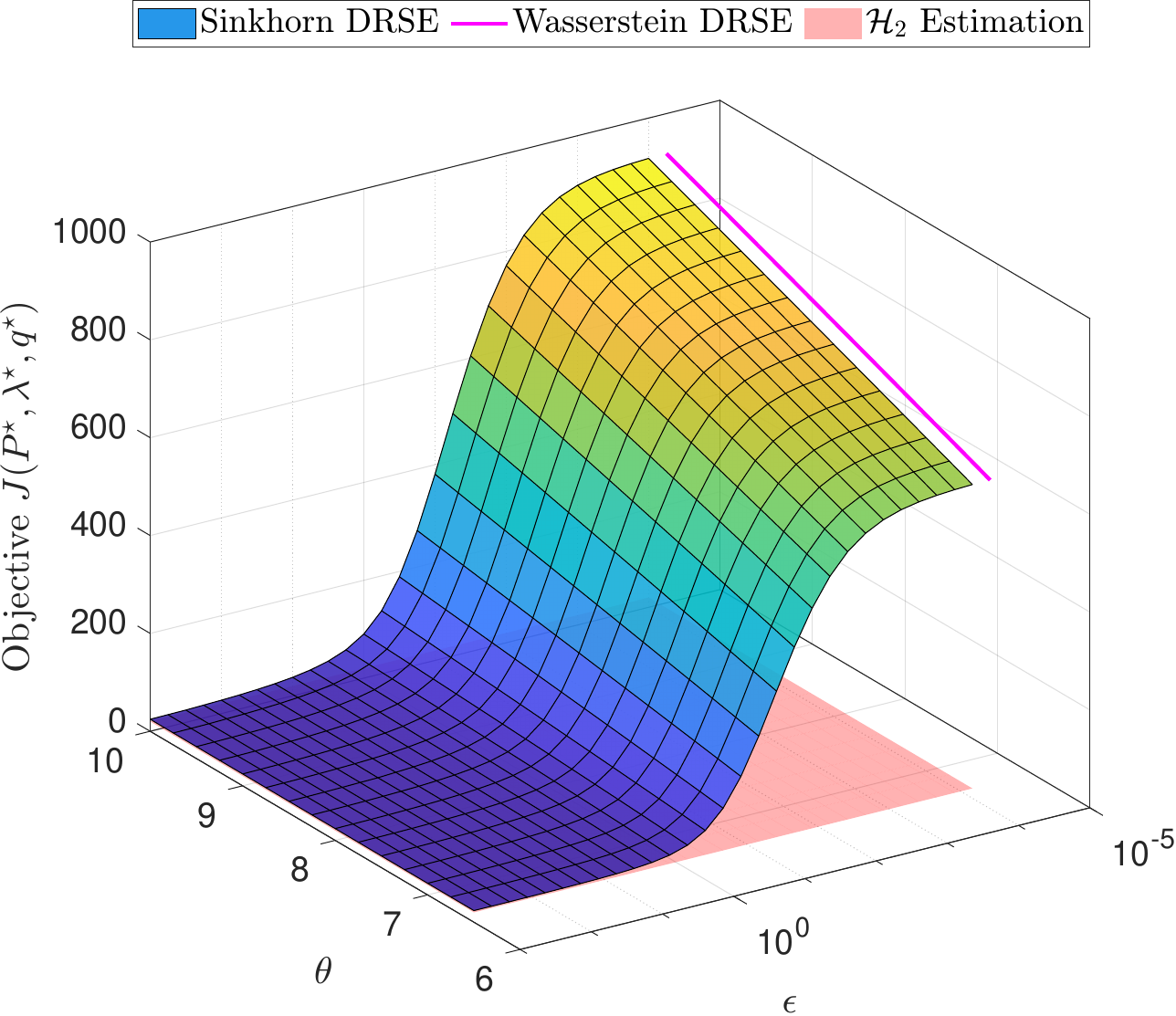} 
        \label{fig_obj_big}
    }
    \caption{The optimal objective values of the $\mathcal{H}_2$ estimation the Wasserstein DRSE, the Sinkhorn DRSE \eqref{eq_reform} using a grid of hyperparameters $(\theta,\epsilon)$.}
    \label{fig_obj}
\end{figure}

To study the out-of-sample performance of different methods, $20,000$ Monte Carlo simulations are carried out, and the resulting average MSE is presented in Fig. \ref{fig_MSE}. Obviously, both DRSE methods with proper hyperparameters perform better than the $\mathcal{H}_2$ estimation method when facing non-Gaussian disturbances.  
A comparison between Figs. \ref{fig_MSE_small} and \ref{fig_MSE_big} reveals the out-of-sample outperformance, which is particularly pronounced for small radii $\theta$.
For this reason, we focus on Fig. \ref{fig_MSE_small} for a detailed analysis. As $\theta$ decreases, the performance of both DRSE methods exhibits improvements, which is followed by degradation after reaching an inflection point, because the radius of the ambiguity set represents the degree of conservatism induced by distributional robustness considerations, which requires a trade-off with the performance. The introduction of the entropic regularization with a proper $\epsilon$ can further reduce the conservatism, thereby leading to a better performance than the Wasserstein DRSE. However, a too large $\epsilon$ may excessively shrink the ambiguity set, which results in degraded performance or even infeasibility of the problem. 

\begin{figure}[htbp]   
    \centering    
        \subfloat[{$\theta \in [10^{-2},10^0]$ and $\epsilon \in [10^{-4},10^{-1.8}]$.}]{
        \includegraphics[width=0.95\linewidth]{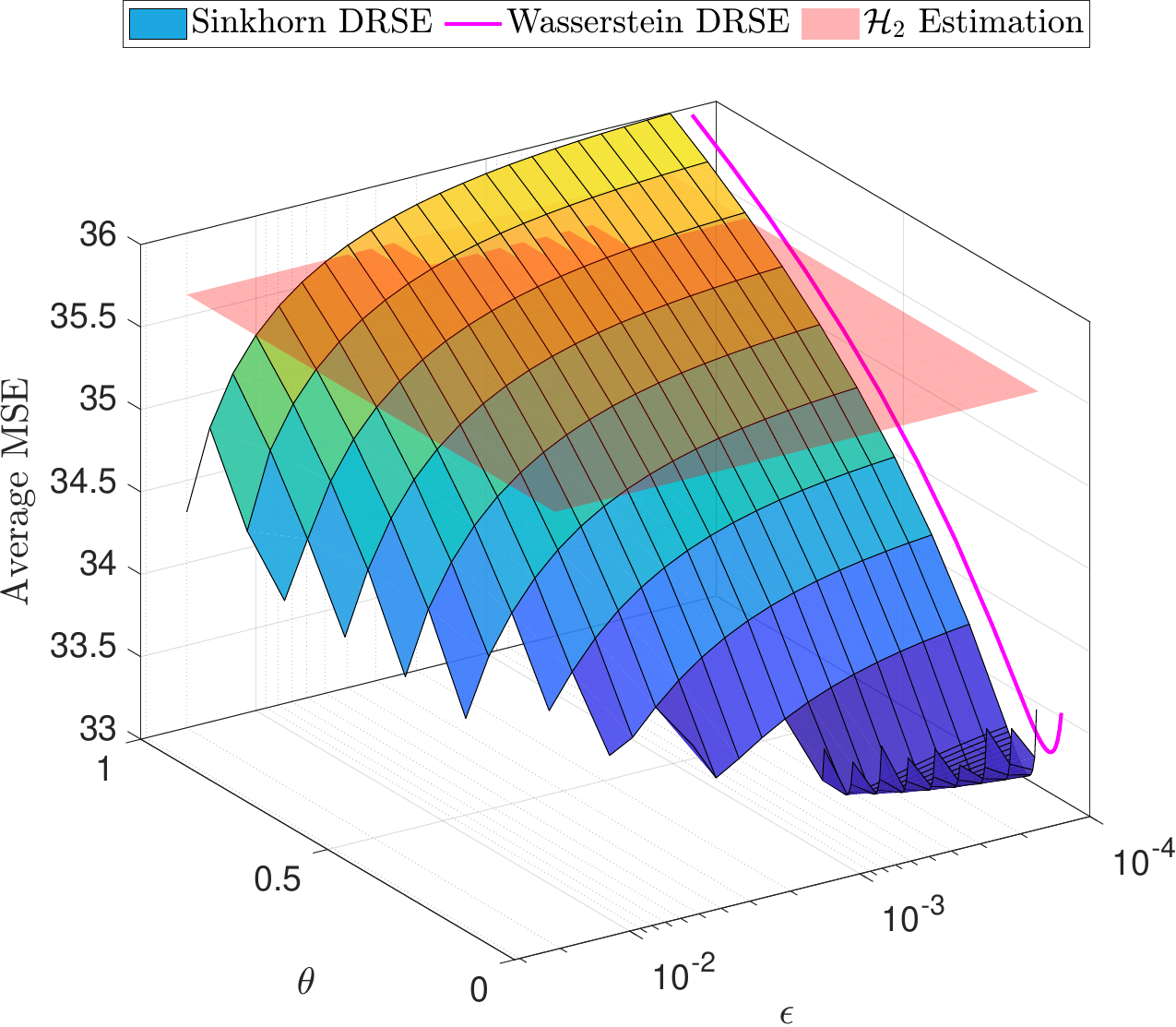} 
        \label{fig_MSE_small}
    }\hspace{0.5cm}
              \subfloat[{$\theta \in [6.5,10]$ and $\epsilon \in [10^{-4},10^3]$.}]{
        \includegraphics[width=0.95\linewidth]{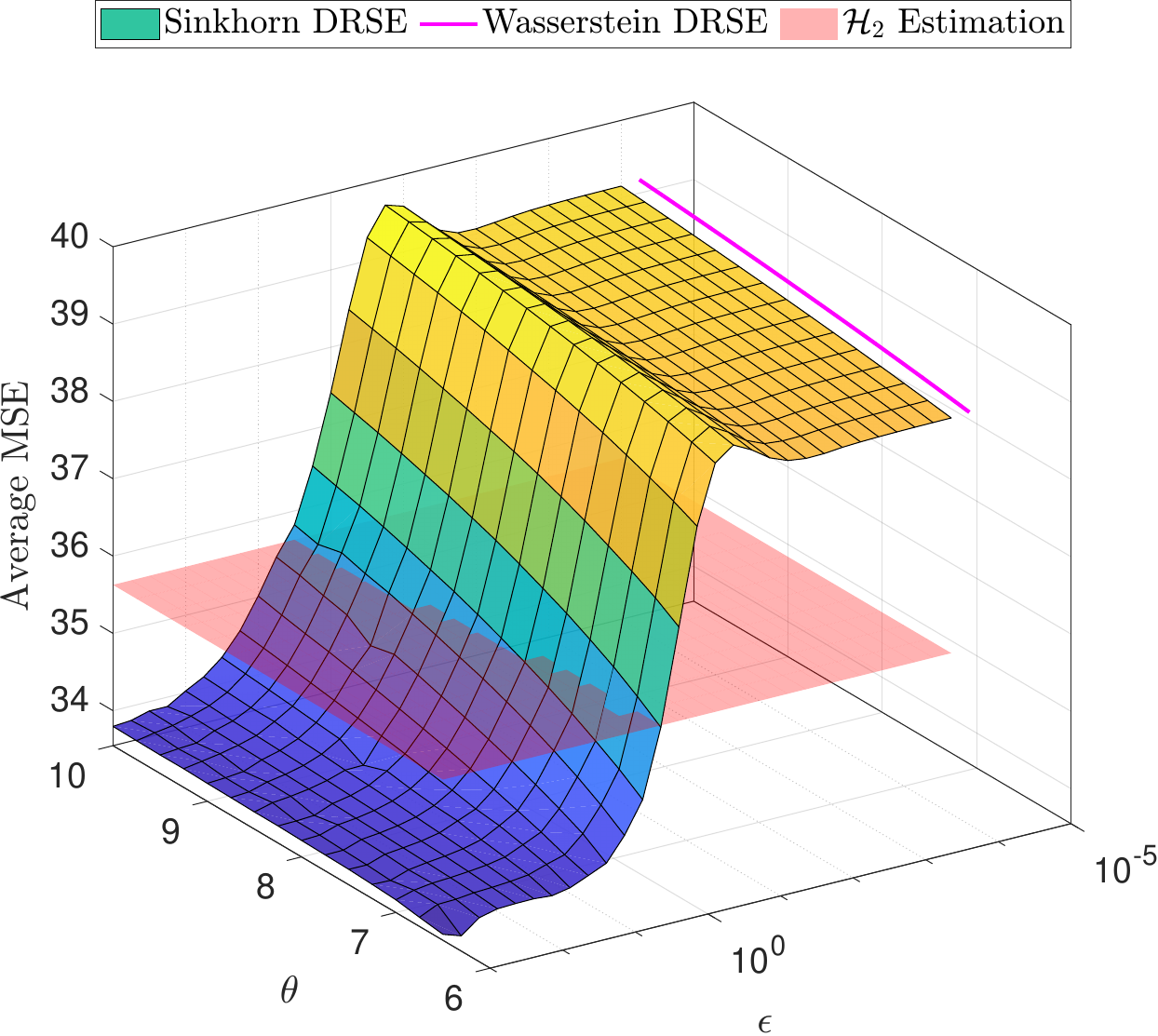} 
        \label{fig_MSE_big}
    }
    \caption{Out-of-sample performance of the $\mathcal{H}_2$ estimation, the Wasserstein DRSE, the Sinkhorn DRSE \eqref{eq_reform} using a grid of hyperparameters $(\theta,\epsilon)$ across $20,000$ Monte Carlo simulations. }
    \label{fig_MSE}
\end{figure}

To visualize the high-dimensional worst-case distribution $\tilde{\mathbb{P}}$ \eqref{eq_theo_worst} in Theorem \ref{theo_worst}, we project $\tilde{\mathbb{P}}$ onto the first dimension $\xi_1'$ and obtain a univariate  marginal distribution $\tilde{\mathbb{P}}_1$. We first partition $\left\{U,u_i \right\}$ in \eqref{theo_worst_para} into: 
\begin{equation*}
    U=\begin{bmatrix}
        U_{11} &U_{12}\\ U_{21}&U_{22}
    \end{bmatrix},~ u_{i} =\begin{bmatrix}
        u_{i,1}\\u_{i,2}
    \end{bmatrix} ,
\end{equation*}
where $U_{11},u_{i,1}\in\mathbb{R}$, $U_{12}^\top,U_{21},u_{i,2} \in\mathbb{R}^{n_\xi-1}$ and $U_{22}\in\mathbb{R}^{(n_\xi-1)\times(n_\xi-1)}$. Similar to the argument in Theorem \ref{theo_worst}, using Gaussian integral yields: 
\begin{equation}
    {\rm d}\tilde{\mathbb{P}}_1(\xi') = \frac{1}{N}\sum_{i=1}^N v_i\cdot \exp\left\{ U'  (\xi_1')^2 + u_i'\xi_1'\right\} {\rm d}\xi', \label{eq_worst_1}
\end{equation}
where
\begin{align*}
    U' &= U_{11}-U_{12}U_{22}^{-1}U_{21},~ u_i' = u_{i,1}- U_{12}U_{22}^{-1}u_{i,2},  \\
    v_i'& =\frac{(2\pi)^{(n_\xi-1)/2}}{\sqrt{\det(-2U_{22})}} \cdot v_i \cdot \exp\left\{-\frac{1}{4}u_{i,2}^{\top}U_{22}^{-1}u_{i,2}\right\}.
\end{align*}
By setting $\theta=1$ and different values of $\epsilon\in\{10^{-4},10^{-3},10^{-2}\}$, the induced marginal worst-case distributions $\tilde{\mathbb{P}}_1(\xi')$ in \eqref{eq_worst_1} are shown in Fig. \ref{fig_worst}. It can be observed that a larger $\epsilon$ leads to a smoother worst-case distribution, thereby showing the potential for reducing the over-conservation of Wasserstein DRSE that corresponds to $\epsilon=0$. In addition, the convergence processes of Algorithm \ref{alg1} applied to \eqref{eq_reform}, which starts at the optimal solution of the Wasserstein DRSE with the same $\theta$, are presented in Fig. \ref{fig_conv}, which highlights the effectiveness. 

\begin{figure}
    \centering
    \includegraphics[width=0.9\linewidth]{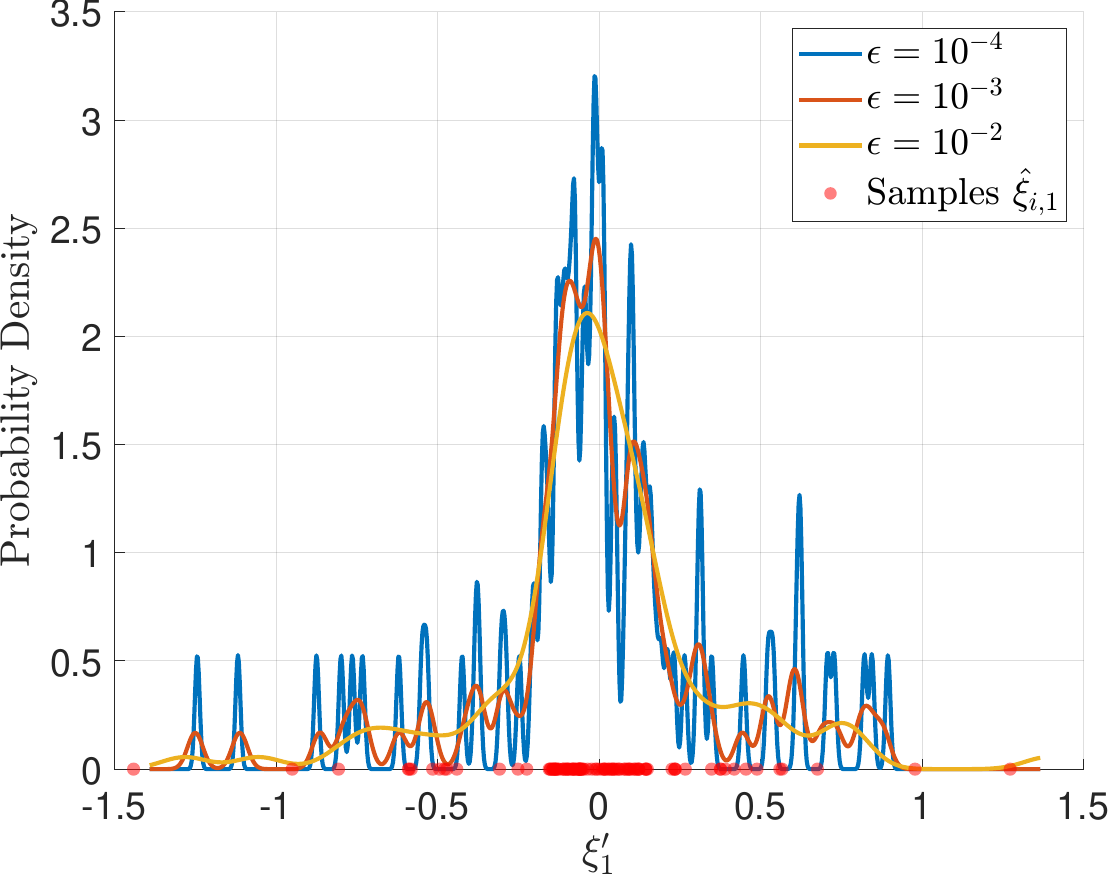}
    \caption{The first dimension $\hat\xi_{i,1}$ of the samples and the worst-case univariate marginal distribution $\tilde{\mathbb{P}}_1$ of \eqref{eq_reform} with $\theta=1$ within $4$ standard deviations of $\hat{\xi}_{i,1}$. 
}
    \label{fig_worst}
\end{figure}
\begin{figure}[htbp] 
    \centering
    \includegraphics[width=0.9\linewidth]{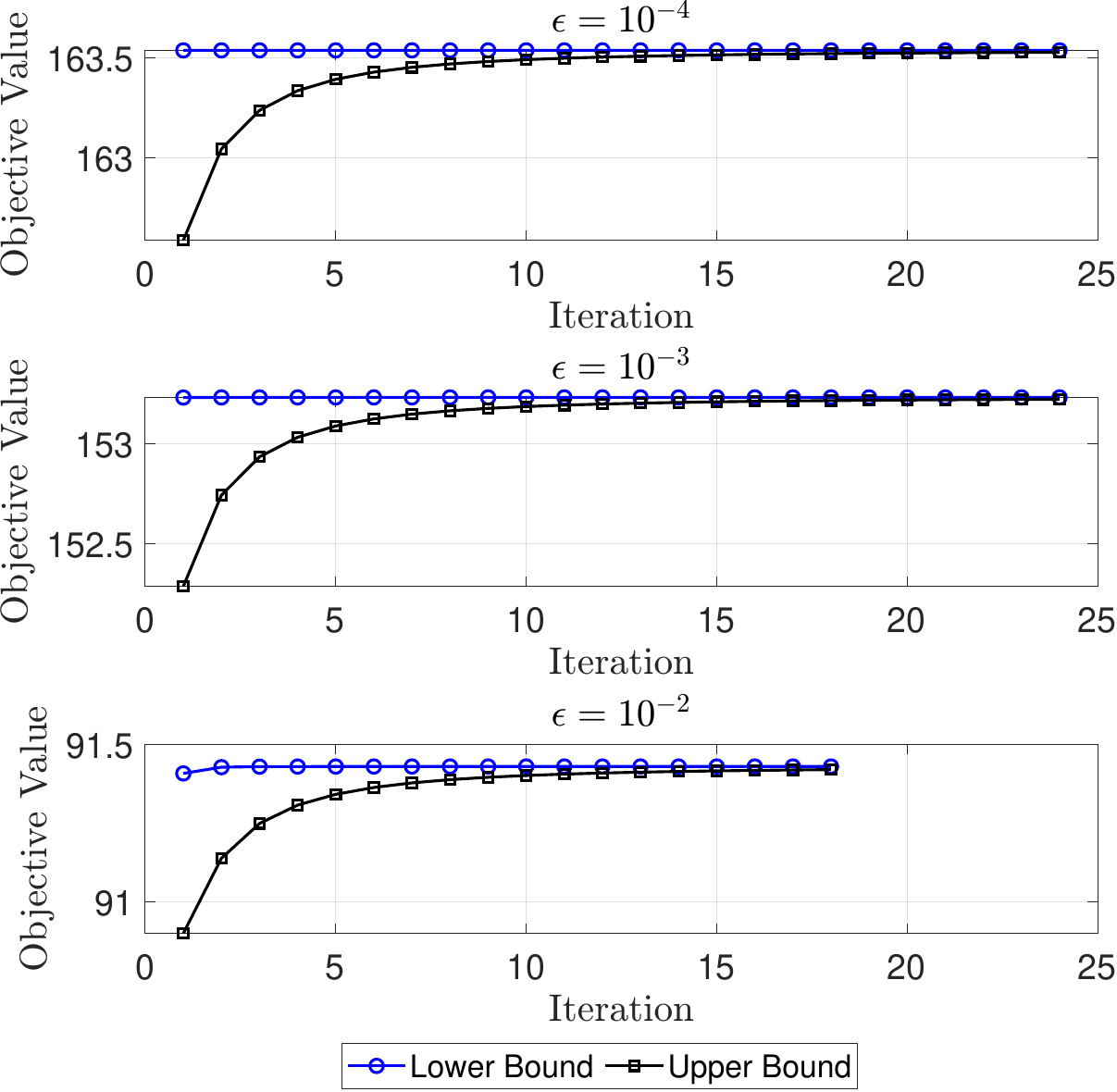}
    \caption{The upper and lower bounds of Algorithm \ref{alg1} in solving \eqref{eq_reform} with $\theta=1$}
    \label{fig_conv}
\end{figure}

Then, we fix $\theta=0.035$ and $\epsilon=10^{-3.8}$ of Sinkhorn DRSE, which yields the best out-of-sample performance in Fig. \ref{fig_MSE}. In this context, we solve Sinkhorn DRSE \eqref{eq_reform} and Wasserstein DRSE with the same radius $\theta$ based on varying sample sizes. The out-of-sample performance of the resulting designs evaluated on $20,000$ Monte Carlo simulations is presented in Fig. \ref{fig_time}. It can be seen that the proposed Sinkhorn DRSE invariably achieves a higher accuracy than the Wasserstein DRSE. The performance of both DRSE methods improves with $N$ increasing, which is attributed to the availability of richer distributional information. An obvious performance gap between the Sinkhorn DRSE and the Wasserstein DRSE can be evidenced under low sample sizes and narrows with $N$ increasing. This is because as $N$ grows the worst distribution of the Wasserstein DRSE is supported on more discrete points, thereby better resembling the underlying continuous true distribution. In addition, Fig. \ref{fig_time} also displays the average computational time of Algorithm \ref{alg1} using different $N$. A close scrutinization indicates that the solution time of Algorithm \ref{alg1} grows linearly with sample sizes $N$, which verifies its computational efficiency.

\begin{figure}
    \centering
    \includegraphics[width=0.9\linewidth]{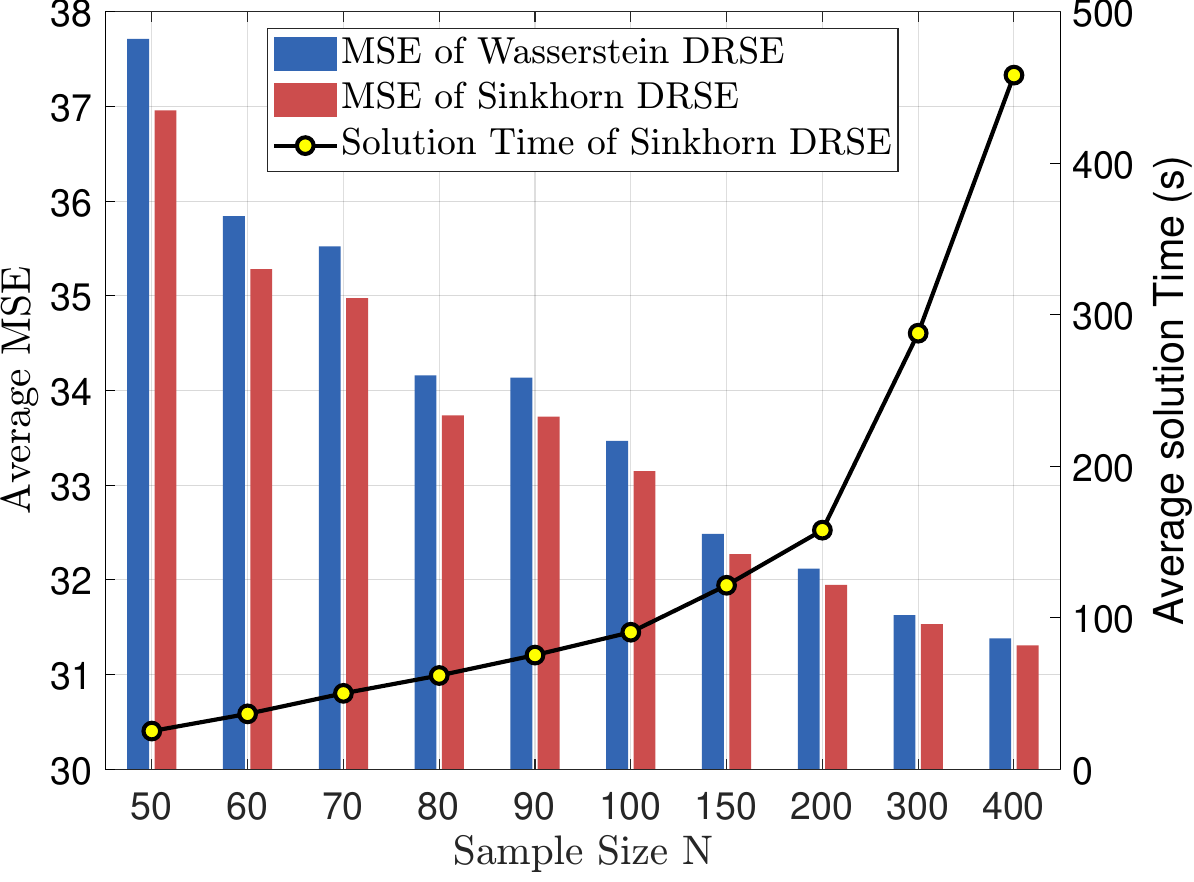}
    \caption{The average MSE of the Sinkhorn DRSE with $(\theta=0.035,~\epsilon=10^{-3.8})$ and the Wasserstein with $\theta=0.035$ as well as solution time of Algorithm \ref{alg1} using different sample sizes $N$.}
    \label{fig_time}
\end{figure}

\section{Conclusion} \label{concl}
In this work, a novel Sinkhorn DRSE method was put forward within the SLS framework, which seeks to alleviate the conservatism in Wasserstein DRSE by incorporating entropic regularization. We provided the first finite-sample probabilistic guarantee for the Sinkhorn ambiguity set. To tackle the resulting min-max design problem, we derived its exact dual reformulation as a finite-dimensional convex program. Then, by analyzing its limiting behaviors, we show that the Sinkhorn DRSE can be seen as an interpolation between $\mathcal{H}_2$ design and Wasserstein DRSE depending on different choices of hyperparameters. To solve the challenging problem, we identify a compact subset of its feasible set without compromising the global optimality and then apply the Frank-Wolfe procedure, for which the theoretical convergence was formally established. Finally, numerical examples demonstrate the superior performance of the Sinkhorn DRSE especially in small-sample regimes and the effective convergence of the tailored first-order solution algorithm. Future work will focus on hyperparameter tuning and extending our method to a recursive infinite-horizon case.


\section*{Reference}

\bibliographystyle{IEEEtran}
\bibliography{ref}

\begin{IEEEbiography}[{\includegraphics[width=1in,height=1.25in,clip,keepaspectratio]{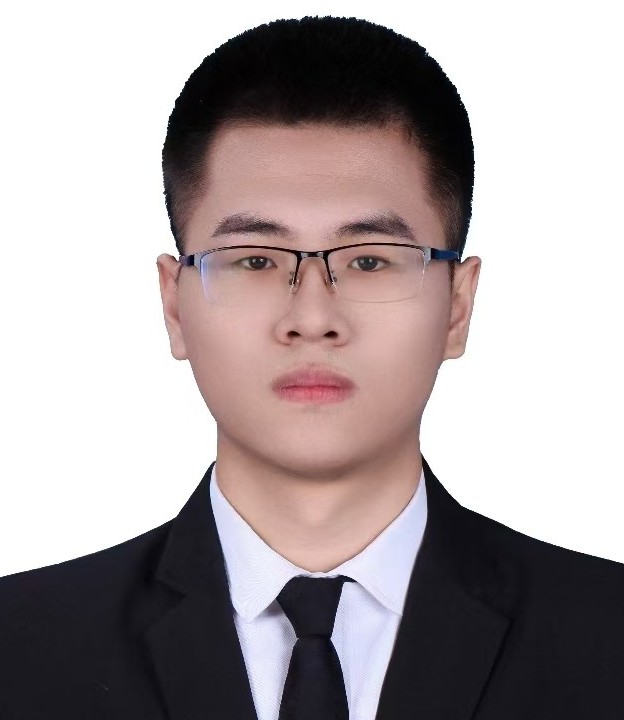}}]{Yulin Feng} received the B.S. degree in automation from Northeastern, Shenyang, China, in 2023. He is currently pursuing the Ph.D. degree in control science and engineering from Tsinghua University, Beijing, China. His current research interests include data-driven predictive control, fault diagnosis and state estimation. He was a recipient of the First Prize of Fang Chong-Zhi Best Paper Award in 2025.
\end{IEEEbiography}

\begin{IEEEbiography}[{\includegraphics[width=1in,height=1.25in,clip,keepaspectratio]{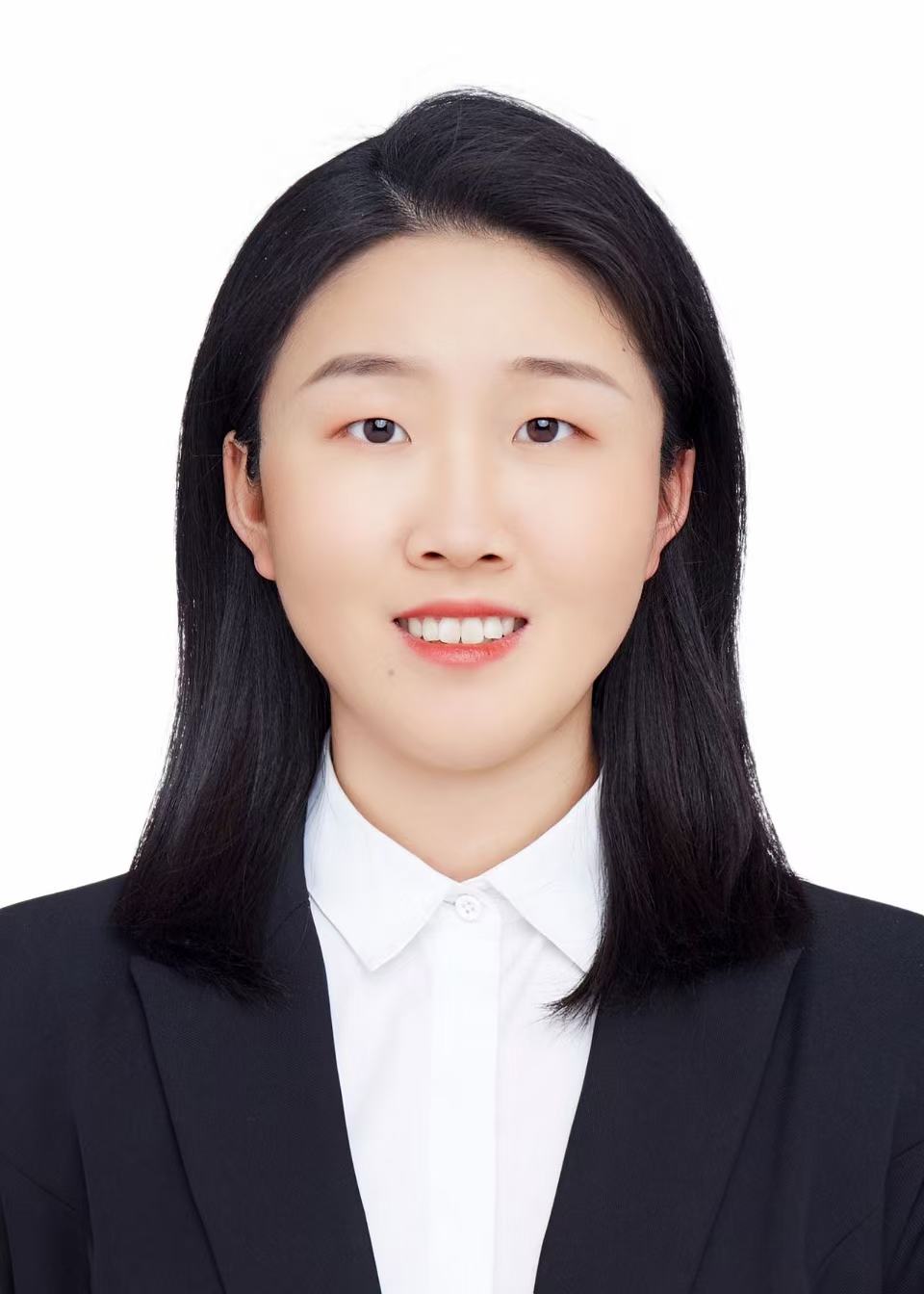}}]{Xianyu Li} received the B.S. degree in automation from Tsinghua University, Beijing, China, in 2022, where she is currently pursuing the Ph.D. degree in control science and engineering. Her current research interests include optimization under uncertainty and contextual optimization.
\end{IEEEbiography}

\begin{IEEEbiography}[{\includegraphics[width=1in,height=1.25in,clip,keepaspectratio]{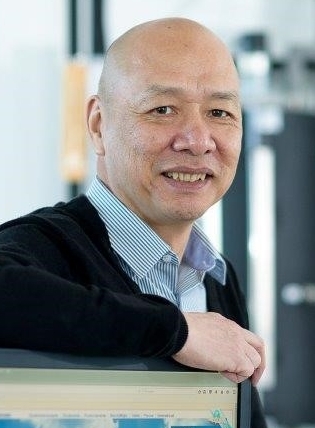}}]{Steven X. Ding} received Dr.-Ing. degree in electrical engineering from the Gerhard-Mercator University of Duisburg, Germany, in 1992. From 1992 to 1994, he was a R\&D engineer at Rheinmetall GmbH, Germany. From 1995 to 2001, he was a full-professor of control engineering at the University of Applied Science Lausitz in Senftenberg, Germany, and served as a vice president of this university during 1998--2000. Since 2001, he has been a full-professor of control engineering and the head of the Institute for Automatic Control and Complex Systems (AKS) at the University of Duisburg-Essen. His research interests are model-based and data-driven fault diagnosis, fault tolerant systems and their applications in industry with a focus on automotive systems, chemical processes and renewable energy systems.
\end{IEEEbiography}

\begin{IEEEbiography}[{\includegraphics[width=1in,height=1.25in,clip,keepaspectratio]{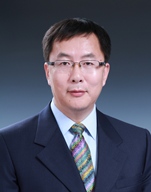}}]{Hao Ye} received the bachelors and Ph.D. degrees in automation from Tsinghua University, Beijing, China, in 1992 and 1996, respectively. He has been with the Department of Automation,
Tsinghua University, since 1996, where he is currently a Professor. He is mainly interested in fault detection and diagnosis of dynamic systems.
\end{IEEEbiography}

\begin{IEEEbiography}[{\includegraphics[width=1in,height=1.25in,clip,keepaspectratio]{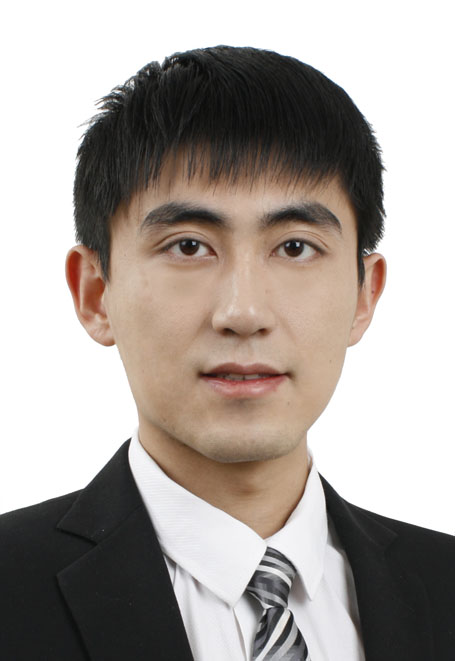}}]{Chao Shang} (Member, IEEE) received the B.Eng. degree in automation and the Ph.D. degree in control science and engineering from Tsinghua University, Beijing, China, in 2011 and 2016, respectively. After working as a Postdoctoral Fellow at Cornell University, he joined the Department of Automation, Tsinghua University in 2018, where he is currently a tenured Associate Professor. His research interests range over data-driven modeling, monitoring, control and optimization with applications to industrial manufacturing processes.

Prof. Shang is the recipient of Springer Excellent Doctorate Theses Award, Emerging Leaders in Control Engineering Practice, Best Paper Award of 1st International Conference on Industrial Artificial Intelligence, Zijing Scholarship, and Teaching Achievement Award from Tsinghua University.
\end{IEEEbiography}

\end{document}